    \pgfplotsset{
        cycle list/.define={my marks}{
            every mark/.append style={solid,fill=\pgfkeysvalueof{/pgfplots/mark list fill}},mark=*\\
            every mark/.append style={solid,fill=\pgfkeysvalueof{/pgfplots/mark list fill}},mark=square*\\
            every mark/.append style={solid,fill=\pgfkeysvalueof{/pgfplots/mark list fill}},mark=triangle*\\
            every mark/.append style={solid,fill=\pgfkeysvalueof{/pgfplots/mark list fill}},mark=diamond*\\
        },
    }
\crefname{hypothesis}{Hypothesis}{Hypotheses}
\title{A Robust Two-Level Schwarz Preconditioner For Sparse Matrices\thanks{Submitted to the editors \today.}}
\author{Hussam Al Daas\thanks{Scientific Computing Department, STFC, Rutherford Appleton Laboratory, Harwell Campus, Didcot, Oxfordshire, OX11 0QX, UK 
  (\email{hussam.al-daas@stfc.ac.uk}).}
  \and Pierre Jolivet\thanks{Sorbonne Universit\'e, CNRS, LIP6, 4 place Jussieu, 75252 Paris Cedex 05, France.} 
\and Fr\'ed\'eric Nataf\thanks{Sorbonne Universit\'e, CNRS, Laboratoire Jacques-Louis Lions, 4 place Jussieu, 75252 Paris Cedex 05, France. ALPINES team Inria Paris.}
\and Pierre-Henri Tournier\footnotemark[4] 
}
\newcommand{\R}{\mathbb{R}}
\newcommand{\part}[1]{\Omega_{#1}}
\definecolor{darkpastelgreen}{rgb}{0.01, 0.75, 0.24}
\definecolor{burgundy}{rgb}{0.5, 0.0, 0.13}
\newcommand{\FN}[1]{{\color{blue}FN: #1}}
\begin{document}

\maketitle

\begin{abstract}
This paper introduces a fully algebraic two-level additive Schwarz preconditioner for general sparse large-scale matrices. The preconditioner is analyzed for symmetric positive definite (SPD) matrices. For those matrices, the coarse space is constructed based on approximating two local subspaces in each subdomain.
These subspaces are obtained by approximating a number of eigenvectors corresponding to dominant eigenvalues of two judiciously posed generalized eigenvalue problems.
The number of eigenvectors can be chosen to control the condition number.
For general sparse matrices, the coarse space is constructed by approximating the image of a local operator that can be defined from information in the coefficient matrix.
The connection between the coarse spaces for SPD and general matrices is also discussed.
Numerical experiments show the great effectiveness of the proposed preconditioners on matrices arising from a wide range of applications.
The set of matrices includes SPD, symmetric indefinite, nonsymmetric, and saddle-point matrices.
In addition, we compare the proposed preconditioners to the state-of-the-art domain decomposition preconditioners.
\end{abstract}

\begin{keywords}
  Algebraic domain decomposition, two-level preconditioner, additive Schwarz,
  sparse linear systems, spectral coarse spaces.
\end{keywords}


\section{Introduction}
In this work, we consider the linear system of equations
\begin{equation}\label{eq:Ax=b}
 A x = b,
\end{equation}
where $A\in\R^{n\times n}$ is sparse and $n\times n$ for $n\gg 1$; $b$ is the right-hand side; and $x$ is the sought solution.

This problem arises in a wide range of scientific applications and optimization.
Methods that are based on direct factorization of the sparse coefficient matrix $A$ such as sparse Cholesky, LDLT, and LU decompositions feature robust and reliable black box solvers for \cref{eq:Ax=b}, see \cite{DufER17,ScoM23}.
However, their underlying algorithms suffer from an intrinsic sequential paradigm and they require a very high amount of memory resources. Hence, sparse direct solvers are not suitable for solving large-scale linear systems of equations using emerging heterogeneous parallel computers.
Iterative methods relying on Krylov subspaces, on the other hand, require only a sparse matrix-vector multiplication ($Av$, for some $v\in\R^{n}$) per iteration which can be highly parallelizable due to the sparsity of $A$. 
The Conjugate Gradient (CG) \cite{HesS52} method, Minimal Residual (MINRES) method \cite{PaiS75}, and the Generalized Minimal Residual (GMRES) \cite{SaaS86} are widely used Krylov iterative methods for solving linear systems of equations with a coefficient matrix $A$ that is symmetric positive definite (SPD), symmetric nonsingular, general nonsingular, respectively, see \cite{Saa03} for further information on iterative methods for linear systems of equations.
The catch in these methods is that their convergence relies heavily on the properties of $A$, and they usually require a very high number of iterations to reach an approximate solution with fine accuracy.
If $A$ is SPD, the $A$-norm of the error at the $k$th iteration of CG satisfies the inequality:
\begin{equation*}\label{eq:cgbound}
 \|A^{-1}b- x_k\|_A \leq 2 \|A^{-1}b - x_0\|_A \left(\frac{\sqrt{\kappa_2\left(A\right)}-1}{\sqrt{\kappa_2 \left(A\right)}+1}\right)^k,
\end{equation*}
 where $\kappa_2\left(A\right)$ is the condition number of $A$, that is the ratio between largest and smallest eigenvalues of $A$, see \cite{Saa03}.
 Therefore, preconditioning the system \cref{eq:Ax=b}, i.e., multiplying both sides from the left by an SPD matrix $M^{-1}$ can change the spectrum of $A$ such that the ratio between the largest and smallest eigenvalues of $M^{-1}A$ is small, and hence, CG with a modified scalar product would converge fast.
Thus, if one can construct $M^{-1}$ cheaply such that the multiplication $M^{-1}v $ for $v\in\R^{n}$ is cheap and the eigenvalues of $M^{-1}A$ are clustered near 1, then, the CG runtime will be reduced.
This also applies when $A$ is symmetric and $M$ is SPD.

Although the convergence of GMRES is still a mystery for general matrices, it is known that spectral information is not enough to describe its behavior, see \cite{GrePS96,NacRT92}.
Nevertheless, it is common to employ preconditioners that are analyzed for symmetric matrices to nonsymmetric ones, see for example \cite{AldJR23}.

Preconditioning techniques vary from the simplest approach of diagonal preconditioners passing by block diagonal preconditioners and incomplete factorization up to sophisticated multilevel preconditioners based on domain decomposition (DD) methods and multigrid and even beyond by mixing some of these aforementioned, see \cite{DolHJNT22,PeaP20,Wat15,XuZ17}.

We focus in this paper on multilevel DD preconditioners, in particular, we are interested in variants of the additive Schwarz method combined with spectral coarse spaces.
These preconditioners have demonstrated, particularly during the last decade, their effectiveness in solving linear systems of equations arising from a variety of applications such as elliptic PDEs \cite{AldGJT21,AldJ22,AldJR23,GanL17,GouS21,HeiKKRW20,HeiS22,KlaKR16,KlaRR15,NatXDS11,SpiDHNPS14}, normal equations \cite{AldJS22}, indefinite Helmholtz equation \cite{BooDGCS22}, advection-diffusion equations and nonsymmetric sparse matrices \cite{AldJR23}.
Originally, spectral DD preconditioners relied essentially on the underlying PDE to set up the coarse space.
More recently, in the last three years, there have been several attempts to establish fully algebraic spectral DD preconditioners \cite{AldG19,AldJ22,AldJR23,GouS21,HeiS22}.
Despite being successful, each of these methods suffers from either requiring impractical high computational costs to set up or a lack of generality in its guaranteed robustness.
The approach proposed in \cite{AldG19} requires massive computational costs yielding impracticality issues. Nonetheless, the framework presented in \cite{AldG19} identifies a class of matrices, \emph{local symmetric positive semidefinite (SPSD) splitting} matrices and their important role in constructing robust spectral DD preconditioners.
In \cite{AldJ22}, despite the computations being local and concurrent, the complexity cost of the presented algorithm is very high, cubic with respect to the local number of unknowns per subdomain.
The robustness of the approach proposed in \cite{AldJR23} can only be guaranteed for diagonally dominant SPD matrices.
\cite{HeiS22} introduces a fully algebraic preconditioner that is guaranteed to be robust for the 2D diffusion equation.

In this paper, we present a new two-level additive Schwarz preconditioner that controls the condition number for SPD matrices.
The coarse space is obtained by solving two generalized eigenvalue problems using the Krylov--Schur method~\cite{HerRV05,Ste02}. 
The construction of the coarse space and the bound on the condition number can be established only by using local information (on each subdomain) from the coefficient matrix.
In addition, we propose an adapted version of the preconditioner to be applied to general matrices. Surprisingly, the general variant seems to be very efficient in practice after being tested on advection-dominated advection-diffusion equation (highly nonsymmetric), and symmetric indefinite saddle-point system arising from Stokes equations.

The paper is organized as follows:
\Cref{sec:background} sets the notation and gives a brief introduction to variants of algebraic additive Schwarz preconditioners.
\Cref{sec:lspsdharmonic} introduces the {lifting and harmonic extension operators} and shows the latter's connection with the local SPSD splitting. These operators will be key in defining our proposed spectral coarse spaces.
It also prepares the necessary elements required to analyze the two-level preconditioner which is then presented in \cref{sec:two-level} along with a bound on the condition number of the preconditioned matrix. Due to the large dimension of the coarse space that is proposed in \cref{sec:two-level}, another coarse space is proposed in \cref{sec:shrinking_the_coarse_space} based on an SVD-like operator truncation of the harmonic extension operator. The truncation parameter allows to adaptively control the condition number of the preconditioned matrix.
While \cref{sec:lspsdharmonic,sec:two-level,sec:shrinking_the_coarse_space} consider the SPD case, \cref{sec:nonsymmetric} introduces a generalization of the proposed coarse space to be applied to general nonsingular matrices.
Numerical experiments on a variety of sparse linear systems of equations arising from highly challenging academic examples such as the biharmonic, diffusion, advection-diffusion, elasticity, and Stokes equations are presented in \cref{sec:numerical_experiments}. They demonstrate the effectiveness and efficiency of the proposed preconditioner. Comparisons with state-of-the-art two-level domain decomposition methods show the competitiveness of the proposed method, especially being fully algebraic and not relying on any further information from the underlying problem.
Concluding remarks and future lines of research are presented in \cref{sec:conclusion}.

\section{Background}
\label{sec:background} 
\subsection{Notation}
We are given a large sparse matrix $A\in\R^{n\times n}$.
Suppose we are given a $N$ nonoverlapping partitioning of the sparsity graph of $A+A^\top$.
We refer to the $i$th subset as the $i$th nonoverlapping subdomain and it is denoted by $\Omega_{I_{i}}$.
Let $R_{I_{i}}$ denote the restriction of a vector of size $n$ to $\Omega_{I_{i}}$.
Given $\Omega_{I_{i}}$, we define the extension of $\Omega_{I_{i}}$ with $\delta$ layers to be the subset
of nodes in the sparsity graph of $A+A^\top$ that are not in $\Omega_{I_{i}}$ and reachable from $\Omega_{I_{i}}$
through a path of length smaller or equal to $\delta$. We denote this subset by $\Omega_{\Gamma_{i,1:\delta}}$.
Without loss of generality we assume that the nodes in $\Omega_{\Gamma_{i,1:\delta}}$ are ordered with respect to their distance from $\Omega_{I_{i}}$.
We refer to the subset $\Omega_{i,1:\delta} = [\Omega_{I_{i}}, \Omega_{\Gamma_{i,1:\delta}}]$ as the overlapping subdomain $i$.
We denote by $R_{i,1:\delta}$ the restriction to $\Omega_{i,1:\delta}$. The number of elements in a subset $\Theta$ is referred to as $|\Theta|$. We assume that $R_{i,1:\delta}$ orders the nodes so that the nonoverlapping nodes appear first, followed by those reachable with distance 1, then 2 and so on.
The $p\times q$ zero matrix is denoted $0_{p,q}$, and the $p\times p$ identity matrix is denoted $I_p$.
Given a vector $v\in\R^{|\Omega_{i,1:\delta}|}$ we refer to its first $|\Omega_{I_{i}}|$ components as $v_{I_{i}}$, to the following $|\Gamma_{i,1:\delta - 1}|$ components as $v_{\Gamma_{i,1:\delta-1}}$, and to the remaining $|\Gamma_{i,\delta}|$ components as $v_{\Gamma_{i,\delta}}$.

We often omit $1:\delta$ from the subscript when we refer to $\Omega_{i,1:\delta}$ or $R_{i,1:\delta}$. For example, $\Omega_i, R_i$ refer to $\Omega_{i,1:\delta}$ and $R_{i,1:\delta}$, respectively.

We define in addition the partition of unity (PoU) matrix $D_i\in\R^{|\Omega_{i,1:\delta}|\times |\Omega_{i,1:\delta}|}$ for $i=1,\ldots,N$. $D_i$ is a diagonal nonnegative matrix that satisfies the relation
\[
I_n = \sum_{i=1}^NR_i^\top D_i R_i.
\]
We consider in this paper $D_i$ that is Boolean; the first ${|\Omega_{I_{i}}|}$ diagonal values are set to one and the rest is zero.

The following example helps the reader understand the notation.
Let $A$ be given as
\[
								A = \begin{pmatrix} 1 & 2 &   &   &   &    &  \\
            																3 & 4 & 5 &   &   &    &  \\
																														& 7 & 8 & 9 &   &    &  \\
																														&   & 10& 11& 12&    &  \\
																														&   &   & 13& 14& 15 &  \\
																														&   &   &   & 16& 17 & 18\\
																														&   &   &   &   & 19 & 20\\
								\end{pmatrix}
\]
and let $N=2$, $\Omega_{I_{1}}=\{1,2,3\}$ and $\Omega_{I_{2}}=\{4,5,6,7\}$.
The restriction operators $R_{I_{1}},R_{I_{2}}$ are 
\[
								R_{I_{1}} = \begin{pmatrix} 1 & 0 & 0 & 0 & 0 &0&0\\
                      														0	& 1 & 0 & 0 & 0 &0&0 \\
																																				0	& 0 & 1 & 0 & 0 &0&0
								\end{pmatrix},
								R_{I_{2}} = \begin{pmatrix} 0 & 0  & 0  &  1 & 0  & 0  &0 \\
                      													 0	& 0  & 0  & 0		&  1 & 0  &0 \\
																																			 0	& 0  & 0  & 0		& 0  &  1 &0 \\
																																			 0	& 0  & 0  & 0		& 0  & 0  & 1
								\end{pmatrix}.
\]
The first layer reached from $\Omega_{I_{1}}$ and $\Omega_{I_{2}}$ is $\Omega_{\Gamma_{1,1:1}} = \{4\}$ and  
$\Omega_{\Gamma_{2,1:1}} = \{3\}$, respectively.
Therefore, the overlapping subdomain $\Omega_{1,1:1} = \{1,2,3,4\}$ and $\Omega_{2,1:1} = \{4,5,6,7,3\}$; the corresponding restriction operators are
\[
								R_{1,1:1} = \begin{pmatrix} 1  & 0  & 0  & 0  &  0 &0 & 0\\
                      																   0 &  1 & 0  & 0  &  0 &0 & 0\\
																																						   0 & 0  &  1 & 0  &  0 &0 & 0\\
																																									0	& 0  & 0  &  1 &  0 &0 & 0
								\end{pmatrix},
								R_{2,1:1} = \begin{pmatrix} 0 & 0  &  0 &  1 & 0  &0  & 0  \\
                          														0 & 0  &  0 & 0		&  1 &0  & 0  \\
						    																														0 & 0  &  0 & 0		& 0  &  1& 0  \\
						    																														0 & 0  &  0 & 0		& 0  &0  &  1 \\
																																								0 & 0  & 1  & 0  & 0  &0  & 0
								\end{pmatrix}.
\]
The associated Boolean PoU matrices are
\[
								D_{1} = \begin{pmatrix} 1  & 0  & 0  & 0  \\
                      										  0&  1 & 0  & 0  \\
																																  0& 0  &  1 & 0  \\
																																  0& 0  & 0  &  0 
								\end{pmatrix},
								D_{2} = \begin{pmatrix} 1  & 0  & 0  & 0  &0 \\
                          							0	&  1 & 0  & 0  &0 \\
						    																							0	& 0  &  1 & 0  &0 \\
																																	0	& 0  & 0  &  1 &0 \\
																																	0	& 0  & 0  & 0  & 0\\
								\end{pmatrix}.
\]

The first two layers reached from $\Omega_{I_{1}}$ and $\Omega_{I_{2}}$ are $\Omega_{\Gamma_{1,1:2}} = \{4,5\}$ and  
$\Omega_{\Gamma_{2,1:2}} = \{3,2\}$, respectively.
Therefore, the overlapping subdomain $\Omega_{1,1:2} = \{1,2,3,4,5\}$ and $\Omega_{2,1:2} = \{4,5,6,3,2\}$; the corresponding restriction operators are
\[
								R_{1,1:2} = \begin{pmatrix} 1  &  0 &  0 &  0 &  0 & 0 & 0 \\
                      																   0 & 1  &  0 &  0 &  0 & 0 & 0 \\
																																						   0 &  0 & 1  &  0 &  0 & 0 & 0 \\
																																									0	&  0 &  0 & 1  &  0 & 0 & 0 \\
                          														 0 &  0 &  0 & 	0	& 1  & 0 & 0 
								\end{pmatrix},
								R_{2,1:2} = \begin{pmatrix} 0 & 0  & 0  & 1  & 0  & 0  &0  \\
                          													 0	& 0  & 0  & 0		& 1  & 0  &0  \\
						    																													 0	& 0  & 0  & 0		& 0  & 1  &0  \\
						    																													 0	& 0  & 0  & 0		& 0  & 0  &1  \\
																																						  0 & 0  & 1  & 0  & 0  & 0  &0  \\
                      																  0 & 1  & 0  & 0  & 0  & 0  &0  
								\end{pmatrix}.
\]
The associated Boolean PoU matrices are
\[
								D_{1} = \begin{pmatrix}  1 & 0  & 0  & 0  & 0 \\
                      										 0 & 1  & 0  & 0  & 0 \\
																																 0 & 0  & 1  & 0  & 0 \\
																																 0 & 0  & 0  & 0  & 0 \\ 
																																 0 & 0  & 0  & 0  & 0 
								\end{pmatrix},
								D_{2} = \begin{pmatrix}  1 & 0  & 0  & 0  & 0  &0 \\
                          							0	& 1  & 0  & 0  & 0  &0 \\
						    																							0	& 0  & 1  & 0  & 0  &0 \\
																																	0	& 0  & 0  & 1  & 0  &0 \\
																																	0	& 0  & 0  & 0  & 0  &0 \\
																																	0	& 0  & 0  & 0  & 0  &0  
								\end{pmatrix}.
\]

In the following sections, the matrix $A$ is SPD unless otherwise stated.
\subsection{Subdomain-permutation}
For each subdomain $i$, let $P_i$ the permutation matrix $P_i=I_n([\Omega_{I_{i}}, \Omega_{\Gamma_{ i,1:\delta-1}},\Omega_{\Gamma_{i,\delta}}, \Omega_{\Gamma_{i,\delta+1:\infty}}], :)$.
We have the corresponding block structure,
\begin{equation}\label{eq:four_block_A}
		P_iAP_i^\top = \begin{pmatrix} A_{I_{i},I_{i}} & A_{I_{i},\Gamma_{i ,1:\delta-1}} & &\\
		                         A_{\Gamma_{i, 1:\delta-1}, I_{i}} & A_{\Gamma_{ i ,1:\delta-1}, \Gamma_{i, 1:\delta-1}} & A_{\Gamma_{i, 1:\delta-1}, \Gamma_{i, \delta}} &\\
		                          & A_{\Gamma_{i,\delta}, \Gamma_{i, 1:\delta-1},} & A_{\Gamma_{i, \delta},\Gamma_{i, \delta}} & A_{\Gamma_{i, \delta},\Gamma_{i, \delta+1:\infty}} \\ 
		                          & & A_{\Gamma_{i, \delta+1:\infty},\Gamma_{i, \delta}}& A_{\Gamma_{i, \delta+1:\infty},\Gamma_{i, \delta+1:\infty}} \end{pmatrix}.
\end{equation}
This formal representation allows us to view the matrix $A$ from a perspective where the unknowns of the subdomain under consideration are positioned first followed by the unknowns of distance one to them through the sparsity graph of $A$, which are followed by the unknowns of distance two and so on.
This is useful for defining local SPSD splitting matrices as well as the definitions of the local subspaces required to construct the spectral coarse space in subsequent sections.


\subsection{Local matrix and one-level Schwarz}
Let $A_{ii} = R_i A R_i^\top (=R_{i,1:\delta} A R_{i,1:\delta}^\top)$. Given the order in $R_i$, $A_{ii}$ takes the form
\[
A_{ii} = \begin{pmatrix} A_{I_{i},I_{i}} & A_{I_{i},\Gamma_{i ,1:\delta-1}} &\\
		                         A_{\Gamma_{i, 1:\delta-1}, I_{i}} & A_{\Gamma_{ i ,1:\delta-1}, \Gamma_{i, 1:\delta-1}} & A_{\Gamma_{i, 1:\delta-1}, \Gamma_{i, \delta}}\\
		                          & A_{\Gamma_{i,\delta}, \Gamma_{i, 1:\delta-1},} & A_{\Gamma_{i, \delta},\Gamma_{i, \delta}}
             \end{pmatrix}.
\]
Note that using the notation of subblocks of $A$, $A_{ii}=A_{\Omega_{i,1:\delta} ,\Omega_{i,1:\delta} }$.

The one-level additive Schwarz preconditioner associated with the overlapping subdomains $(\Omega_{i,1:\delta})_{1\leq i \leq N}$ is defined as:
\begin{equation}\label{eq:onelevelasm}
M_{\text{ASM}}^{-1} = \sum_{i=1}^N R_{i}^\top A_{ii}^{-1}R_i.
\end{equation}

The one-level restricted additive Schwarz preconditioner \cite{CaiS99} associated with the overlapping subdomains $(\Omega_{i,1:\delta})_{1\leq i \leq N}$ is defined as:
\begin{equation}\label{eq:onelevelras}
M_{\text{RAS}}^{-1} = \sum_{i=1}^N R_{i}^\top D_i A_{ii}^{-1}R_i,
\end{equation}
where $D_i\in\R^{|\Omega_{i,1:\delta}|\times |\Omega_{i,1:\delta}|}$ is the PoU matrix.

\section{Local SPSD splitting and local projections}
\label{sec:lspsdharmonic}
In this section, we recall the definition of the local Symmetric Positive Semi Definite (SPSD) splitting of the matrix $A$ with respect to the $i$th subdomain and present the {\it harmonic} and the {\it lifting} projections.
These two projections will define the spectral coarse space.
\subsection{Local SPSD splitting}
\begin{lemma}
\label{lemma:spsd_splitting}
Let $\widetilde{A}_i$ be defined such that
\begin{equation}\label{eq:LSPSD}
   \widetilde{A}_i := 
   P_i^\top \begin{pmatrix} A_{I_{i},I_{i}} & A_{I_{i},\Gamma_{i ,1:\delta-1}} &&\\
		                         A_{\Gamma_{i, 1:\delta-1}, I_{i}} & A_{\Gamma_{ i ,1:\delta-1}, \Gamma_{i, 1:\delta-1}} & A_{\Gamma_{i, 1:\delta-1}, \Gamma_{i, \delta}}&\\
		                          & A_{\Gamma_{i,\delta}, \Gamma_{i, 1:\delta-1},} & A_{\Gamma_{i,\delta}, \Gamma_{i, 1:\delta-1}} S_{\Gamma_{i, 1:\delta-1}}^{-1}   A_{\Gamma_{i, 1:\delta-1}, \Gamma_{i, \delta}}       &\\ &&& 0  \end{pmatrix}P_i
\end{equation}
where 
\begin{equation}\label{eq:S_G}
S_{\Gamma_{i, 1:\delta-1}} = A_{\Gamma_{ i ,1:\delta-1}, \Gamma_{i, 1:\delta-1}} - A_{\Gamma_{i, 1:\delta-1}, I_{i}}  A_{I_{i},I_{i}}^{-1} A_{I_{i},\Gamma_{i ,1:\delta-1}}.
\end{equation}
Then, $\widetilde{A}_i$ is a local SPSD splitting of $A$, i.e., $\widetilde{A}_i$ and $A-\widetilde{A}_i$ are SPSD matrices.
\end{lemma}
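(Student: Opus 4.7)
The plan is to verify the two SPSD properties separately, leveraging the block structure of \cref{eq:four_block_A,eq:LSPSD}. Since $P_i$ is a permutation (hence orthogonal), it suffices to work with the inner matrices, and the trailing block indexed by $\Omega_{\Gamma_{i,\delta+1:\infty}}$ (zero in $\widetilde{A}_i$) can be handled separately in both halves.

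For $\widetilde{A}_i \succeq 0$: after discarding the zero trailing block, what remains is a $3\times 3$ block system indexed by $(\Omega_{I_{i}},\Omega_{\Gamma_{i,1:\delta-1}},\Omega_{\Gamma_{i,\delta}})$. Since $A_{I_{i},I_{i}}$ is SPD as a principal submatrix of the SPD matrix $A$, I would perform block Gaussian elimination of this pivot and observe that the remaining Schur complement has the shape $\begin{pmatrix} S & B \\ B^\top & B^\top S^{-1} B \end{pmatrix}$ with $S = S_{\Gamma_{i,1:\delta-1}}$ and $B = A_{\Gamma_{i,1:\delta-1},\Gamma_{i,\delta}}$, and that $S$ itself is SPD (being the Schur complement of $A_{I_{i},I_{i}}$ in an SPD principal submatrix of $A$). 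This special shape factors as $\begin{pmatrix} S \\ B^\top \end{pmatrix} S^{-1} \begin{pmatrix} S & B \end{pmatrix}$, which is manifestly SPSD. Combined with the SPD pivot, this yields $\widetilde{A}_i \succeq 0$.

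For $A - \widetilde{A}_i \succeq 0$: subtracting \cref{eq:LSPSD} from \cref{eq:four_block_A} cancels the blocks indexed by $(\Omega_{I_{i}},\Omega_{\Gamma_{i,1:\delta-1}})$ entirely and leaves a remainder supported on the $(\Omega_{\Gamma_{i,\delta}},\Omega_{\Gamma_{i,\delta+1:\infty}})$ block. A short block-matrix calculation then identifies this remainder with the Schur complement of $A$ with respect to its SPD principal submatrix indexed by $(\Omega_{I_{i}},\Omega_{\Gamma_{i,1:\delta-1}})$; the sparsity fact $A_{\Gamma_{i,1:\delta-1},\Gamma_{i,\delta+1:\infty}} = 0$, imposed by the distance-based definition of the layers in \cref{eq:four_block_A}, is exactly what forces the entire correction from this elimination to land on the $(\Gamma_{i,\delta},\Gamma_{i,\delta})$ entry. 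The Schur complement of an SPD matrix in an SPD principal block is SPD, so $A - \widetilde{A}_i \succeq 0$.

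The main obstacle is bookkeeping rather than any deep step: both halves ultimately rest on the zero off-diagonal block $A_{\Gamma_{i,1:\delta-1},\Gamma_{i,\delta+1:\infty}} = 0$ and on the positivity of the SPD pivots inherited from restricting $A$. Once these sparsity and positivity statements are verified from the definition of the overlapping layers, the remaining Schur complement identifications reduce to routine block-matrix manipulations.
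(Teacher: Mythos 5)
Your argument is correct and follows essentially the same route as the paper: both halves are Schur-complement identifications that hinge on the zero off-diagonal blocks $A_{I_i,\Gamma_{i,\delta}}=0$ and $A_{\Gamma_{i,1:\delta-1},\Gamma_{i,\delta+1:\infty}}=0$ coming from the layered structure in \cref{eq:four_block_A}, together with positivity of the SPD pivots. The only differences are cosmetic — you eliminate $A_{I_i,I_i}$ first and exhibit the remaining block as the Gram-type factorization $\bigl(\begin{smallmatrix} S \\ B^\top \end{smallmatrix}\bigr) S^{-1} \bigl(\begin{smallmatrix} S & B \end{smallmatrix}\bigr)$, whereas the paper eliminates the full block $A_{\Omega_{i,1:\delta-1},\Omega_{i,1:\delta-1}}$ and shows the resulting Schur complement vanishes; these are equivalent statements.
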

\begin{proof}
Consider the Schur complement w.r.t. the third block in $P_i \widetilde{A}_iP_i^\top$
$$X = A_{\Gamma_{i,\delta}, \Gamma_{i, 1:\delta-1}} S_{\Gamma_{i, 1:\delta-1}}^{-1}   A_{\Gamma_{i, 1:\delta-1}, \Gamma_{i, \delta}} - A_{\Gamma_{i,\delta}, \Omega_{i, 1:\delta-1}} A_{\Omega_{i, 1:\delta-1},\Omega_{i, 1:\delta-1}}^{-1}   A_{\Omega_{i, 1:\delta-1}, \Gamma_{i, \delta}}.$$
Since $A_{\Gamma_{i,\delta},I_{i}} = A_{I_{i},\Gamma_{i,\delta}}^\top = 0$, we have 
\begin{multline*}
A_{\Gamma_{i,\delta}, \Omega_{i, 1:\delta-1}} A_{\Omega_{i, 1:\delta-1},\Omega_{i, 1:\delta-1}}^{-1}   A_{\Omega_{i, 1:\delta-1}, \Gamma_{i, \delta}} =\\
A_{\Gamma_{i,\delta}, \Gamma_{i, 1:\delta-1}} (A_{\Gamma_{i, 1:\delta-1},\Gamma_{i, 1:\delta-1}} - A_{\Gamma_{i, 1:\delta-1}, I_{i}} A_{I_{i},I_{i}}^{-1} A_{ I_{i}, \Gamma_{i, 1:\delta-1}})^{-1}   A_{\Gamma_{i, 1:\delta-1}, \Gamma_{i, \delta}},
 \end{multline*}
 Hence $X=0$, and
since $A_{\Omega_{i, 1:\delta-1},\Omega_{i, 1:\delta-1}}$ and $A_{\Gamma_{i,\delta}, \Gamma_{i, 1:\delta-1}} S_{\Gamma_{i, 1:\delta-1}}^{-1}   A_{\Gamma_{i, 1:\delta-1}, \Gamma_{i, \delta}}$ are SPD and SPSD, respectively, we deduce that $\widetilde{A}_i$ is SPSD.

Now, consider the Schur complement of $P_iAP_i^\top -P_i \widetilde{A}_iP_i^\top$ w.r.t. the third block
 \begin{multline*}
  Y = A_{\Gamma_{i, \delta},\Gamma_{i, \delta}}	 - A_{\Gamma_{i, \delta},\Gamma_{i, 1:\delta-1}}  S_{\Gamma_{i, 1:\delta-1}}^{-1}A_{\Gamma_{i, 1:\delta-1},\Gamma_{i, \delta}}\\  - A_{\Gamma_{i, \delta},\Gamma_{i, \delta+1:\infty}}  A_{\Gamma_{i, \delta+1:\infty},\Gamma_{i, \delta+1:\infty}}^{-1}A_{\Gamma_{i, \delta+1:\infty},\Gamma_{i, \delta}}
\end{multline*}
which is nothing but the Schur complement of the matrix $P_iAP_i^\top$ w.r.t. the third block, hence $Y$ is SPD since $A$ is.
\end{proof}

The restriction of $\widetilde{A}_i$ to the overlapping subdomain $i$ is denoted by $\widetilde{A}_{ii}:=R_i \widetilde{A}_i R_i^\top$. Note that due to the locality of $\widetilde{A}_i$, we have
\begin{equation}
	\label{eq:AtildeiAtildeii}
\widetilde{A}_{i}=R_i^\top \widetilde{A}_{ii} R_i\,.	
\end{equation}

\begin{lemma}
\label{lemma:multiplicity}
Let $\widetilde{A}_i$ be the local SPSD splitting defined in \cref{eq:LSPSD} for subdomain $i$.
We have
\[
0 \leq u^\top \sum_{i=1}^N \widetilde{A}_i u \leq k_c\, u^\top A u, \qquad  \forall u \ \in \R^n
\] 
where $k_c$ is the number of colors (to be considered for the $\delta$ overlapping subdomains).
\end{lemma}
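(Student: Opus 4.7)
The lower bound is immediate from \cref{lemma:spsd_splitting}: each $\widetilde{A}_i$ is SPSD, so $u^\top \sum_{i=1}^N \widetilde{A}_i u \ge 0$.

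For the upper bound, the plan is a coloring argument combined with a Schur-complement identification. Let $\{C_1,\dots,C_{k_c}\}$ be a proper coloring of the overlap graph of the $\delta$-overlapping subdomains, so that distinct $i,j\in C_c$ satisfy $\Omega_{i,1:\delta}\cap\Omega_{j,1:\delta}=\emptyset$. Writing $\sum_{i=1}^N \widetilde{A}_i=\sum_{c=1}^{k_c}B_c$ with $B_c:=\sum_{i\in C_c}\widetilde{A}_i$, it suffices to show that $B_c \le A$ in the Loewner order for each $c$, and then sum the $k_c$ resulting inequalities.

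The heart of the proof is to identify $A-B_c$ with the Schur complement of $A$ relative to $W_c:=\bigcup_{i\in C_c}\Omega_{i,1:\delta-1}$, extended by zero on $W_c$. Two ingredients enter. First, a distance argument: a vertex of $\Omega_{i,1:\delta-1}$ lies at distance at most $\delta-1$ from $\Omega_{I_i}$, while any vertex outside $\Omega_{i,1:\delta}$ lies at distance at least $\delta+1$ from $\Omega_{I_i}$; since the coloring forces $\Omega_{j,1:\delta-1}$ for $j\in C_c\setminus\{i\}$ to lie outside $\Omega_{i,1:\delta}$, the inner parts of same-colored subdomains are pairwise at graph distance at least $2$ in $A$, so $A_{W_c,W_c}$ is block diagonal with blocks $A_{\Omega_{i,1:\delta-1},\Omega_{i,1:\delta-1}}$. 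Second, direct inspection of \cref{eq:LSPSD} shows that the first two block rows and columns of $\widetilde{A}_i$ coincide with those of $A$, while the $\Gamma_{i,\delta}$-diagonal block of $\widetilde{A}_i$ equals $A_{\Gamma_{i,\delta},\Omega_{i,1:\delta-1}}A_{\Omega_{i,1:\delta-1},\Omega_{i,1:\delta-1}}^{-1}A_{\Omega_{i,1:\delta-1},\Gamma_{i,\delta}}$ (a rewrite of the Schur-type modification via the block-inverse formula applied to $S_{\Gamma_{i,1:\delta-1}}$ in \cref{eq:S_G}). Combining these, $A-B_c$ has zero rows and columns on all of $W_c$, and its restriction to $V_c:=\Omega\setminus W_c$ equals $A_{V_c,V_c}-A_{V_c,W_c}A_{W_c,W_c}^{-1}A_{W_c,V_c}$.

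Since $A$ is SPD, this Schur complement is SPD, so $A-B_c\ge 0$ for each color class, and summing gives $\sum_{i=1}^N\widetilde{A}_i\le k_c A$. The main obstacle will be the careful block bookkeeping in the second ingredient, namely verifying that the per-subdomain Schur corrections built into $\widetilde{A}_i$ reassemble into the global Schur complement $A_{V_c,W_c}A_{W_c,W_c}^{-1}A_{W_c,V_c}$. This succeeds precisely because the block-diagonal structure of $A_{W_c,W_c}$, itself a consequence of the distance-$\ge 2$ decoupling from the coloring, allows its inverse to act independently on each inner block, matching the subdomain-by-subdomain design of \cref{eq:LSPSD}.
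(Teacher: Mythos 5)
Your proof is correct. Its skeleton is the same as the paper's---the lower bound from the SPSD property of each $\widetilde{A}_i$, then grouping the subdomains by color and reducing the upper bound to $\sum_{i\in C_c}\widetilde{A}_i\leq A$ in the sense of quadratic forms for each color class---but your justification of that per-color inequality takes a genuinely different, and more explicit, route. The paper argues iteratively: it invokes \cref{lemma:spsd_splitting} to get that $A-\widetilde{A}_i$ is SPSD, and then asserts that, because $\widetilde{A}_j$ is supported on $\Omega_{j,1:\delta}$, which is disjoint from $\Omega_{i,1:\delta}$, one may ``safely'' subtract $\widetilde{A}_j$ from $A-\widetilde{A}_i$ and remain SPSD, repeating over the color class. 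That locality claim is left implicit; making it precise essentially amounts to re-running the Schur-complement argument on the already-modified matrix. You instead perform all the subtractions at once, identifying $A-\sum_{i\in C_c}\widetilde{A}_i$ as the zero-padded Schur complement of $A$ with respect to $W_c=\bigcup_{i\in C_c}\Omega_{i,1:\delta-1}$. The two ingredients you isolate are exactly what is needed: the distance-$\geq 2$ separation of the inner regions forces $A_{W_c,W_c}$ to be block diagonal, so the global correction $A_{V_c,W_c}A_{W_c,W_c}^{-1}A_{W_c,V_c}$ splits into the per-subdomain corrections, and the rewriting of the $\Gamma_{i,\delta}$ diagonal block of \cref{eq:LSPSD} as $A_{\Gamma_{i,\delta},\Omega_{i,1:\delta-1}}A_{\Omega_{i,1:\delta-1},\Omega_{i,1:\delta-1}}^{-1}A_{\Omega_{i,1:\delta-1},\Gamma_{i,\delta}}$ is precisely the identity ($X=0$) established in the proof of \cref{lemma:spsd_splitting}. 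Positivity then follows in one step from the fact that a Schur complement of an SPD matrix is SPD. Your approach costs more block bookkeeping but buys a fully rigorous, one-shot argument where the paper relies on an unproven iteration; both yield the same constant $k_c$.
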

\begin{proof}
Let $G_1, \ldots, G_{k_c}$ be the partitioning of the subdomains into $k_c$ groups each gathering the subdomains with the same color.
The proof is immediate if we prove that $0 \leq u^\top \sum_{i\in G_k} \widetilde{A}_i u \leq u^\top A u$.

The last inequality can be easily argued since $ \Omega_{i,1:\delta} \cap  \Omega_{j,1:\delta}$ is empty for any $i\neq j \in G_k$, for $k = 1, \ldots, k_c$.
Let $k\in\{1,\ldots,k_c\}$. For $i\in G_k$, subtracting $\widetilde{A}_i$ from $A$ will only affect the nodes $\Omega_{i,1:\delta}$, therefore, we can safely subtract $\widetilde{A}_j$ for $j\neq i \in G_k$ from $A-\widetilde{A}_i$ and still have $A-\widetilde{A}_i-\widetilde{A}_j$ SPSD. The last inequality can be obtained by repeating the subtraction process for the rest of the elements in $G_k$. 
\end{proof}

\subsection{Local harmonic projection}
\begin{definition}[Local harmonic operator]
For subdomain $i\in\{1,\ldots,N\}$, the local harmonic operator is defined as

  \begin{align}\label{eq:HOp}
    \begin{split}
      \Pi_i:  \R^{|\Omega_{i,1:\delta}|} & \to \R^{|\Omega_{i,1:\delta}|}\\
          v=\begin{pmatrix}v_{\Omega_{i,1:\delta-1}}\\v_{\Gamma_{i,\delta}}\end{pmatrix} & \mapsto \begin{pmatrix} -A_{\Omega_{i,1:\delta-1},\Omega_{i,1:\delta-1}}^{-1} A_{\Omega_{i,1:\delta-1},\Gamma_{i,\delta}} 
          \\I_{|\Gamma_{i,\delta}|}\end{pmatrix}\begin{pmatrix}0_{|\Gamma_{i,\delta}|,|\Omega_{i,1:\delta-1}|}  & I_{|\Gamma_{i,\delta}|} \end{pmatrix} v.
    \end{split}
  \end{align}
\end{definition}
\begin{lemma}
\label{lemma:projection}
The operator $\Pi_i$ \cref{eq:HOp} is a $A_{ii}$-orthogonal projection.
That is, $\Pi_i^2 = \Pi_i$, and 
\[
(I-\Pi_i)^\top A_{ii} \Pi_i = 0.
\]
\end{lemma}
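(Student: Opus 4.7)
The plan is to verify both claims by writing out $A_{ii}$ and $\Pi_i$ in a two-block form that fits the partition $\Omega_{i,1:\delta} = \Omega_{i,1:\delta-1} \cup \Gamma_{i,\delta}$, and then compute the products directly. In block form,
\[
A_{ii} = \begin{pmatrix} W & B \\ B^\top & C \end{pmatrix}, \qquad
\Pi_i = \begin{pmatrix} 0 & -W^{-1} B \\ 0 & I \end{pmatrix},
\]
where $W = A_{\Omega_{i,1:\delta-1},\Omega_{i,1:\delta-1}}$, $B = A_{\Omega_{i,1:\delta-1},\Gamma_{i,\delta}}$, $C = A_{\Gamma_{i,\delta},\Gamma_{i,\delta}}$, and I have used the symmetry of $A$. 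Since $A$ is SPD the principal submatrix $W$ is invertible, so $\Pi_i$ is well defined.

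For idempotency, I would simply multiply the block matrix for $\Pi_i$ by itself; the only nontrivial entries come from the second block column, and one gets $(-W^{-1}B)\cdot I = -W^{-1}B$ in the top right and $I$ in the bottom right, so $\Pi_i^2 = \Pi_i$.

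For $A_{ii}$-orthogonality, the key computation is $A_{ii}\Pi_i$. Using the block form,
\[
A_{ii}\Pi_i = \begin{pmatrix} 0 & -B + B \\ 0 & -B^\top W^{-1} B + C \end{pmatrix} = \begin{pmatrix} 0 & 0 \\ 0 & S \end{pmatrix},
\]
where $S = C - B^\top W^{-1} B$ is the Schur complement of $W$ in $A_{ii}$. Then $(I-\Pi_i)^\top = \begin{pmatrix} I & 0 \\ B^\top W^{-\top} & 0 \end{pmatrix}$, and multiplying yields $(I-\Pi_i)^\top A_{ii} \Pi_i = 0$ because the right factor has zero top block.

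There is no real obstacle here: the result is essentially a block LDU observation. The only thing to be slightly careful about is making the block partition of $\Pi_i$ consistent with that of $A_{ii}$, i.e.\ grouping the first two blocks of the triple-block decomposition of $A_{ii}$ (corresponding to $I_i$ and $\Gamma_{i,1:\delta-1}$) together as the single block $\Omega_{i,1:\delta-1}$, after which the computation is a one-line block manipulation.
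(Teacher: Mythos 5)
Your proposal is correct and is essentially the paper's argument in matrix rather than vector form: both proofs reduce to the observation that $A_{ii}\Pi_i$ vanishes on the $\Omega_{i,1:\delta-1}$ block (leaving only the Schur complement in the $\Gamma_{i,\delta}$ block) while $I-\Pi_i$ maps into vectors supported on $\Omega_{i,1:\delta-1}$, so the pairing is zero. The block bookkeeping (grouping $I_i$ and $\Gamma_{i,1:\delta-1}$ into a single block) and the idempotency check are both fine.
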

\begin{proof}
It is straightforward to see that $\Pi_i^2 = \Pi_i$. Now, let 
$u,v \in\R^{n_{|\Omega_{i,1:\delta}|}}$,
 we have 
\begin{align*}
A_{ii} \Pi_i v &= \begin{pmatrix} -A_{\Omega_{i,1:\delta-1},\Gamma_{i,\delta}} v_{\Gamma_{i,\delta}} + A_{\Omega_{i,1:\delta-1},\Gamma_{i,\delta}} v_{\Gamma_{i,\delta}}\\ -A_{\Gamma_{i,\delta},\Omega_{i,1:\delta-1}}A_{\Omega_{i,1:\delta-1},\Omega_{i,1:\delta-1}}^{-1} A_{\Omega_{i,1:\delta-1},\Gamma_{i,\delta}}  v_{\Gamma_{i,\delta}} + A_{\Gamma_{i,\delta},\Gamma_{i,\delta}}v_{\Gamma_{i,\delta}}  \end{pmatrix} \\
&=\begin{pmatrix} 0_{|\Omega_{i,1:\delta-1}|} \\ -A_{\Gamma_{i,\delta},\Omega_{i,1:\delta-1}}A_{\Omega_{i,1:\delta-1},\Omega_{i,1:\delta-1}}^{-1} A_{\Omega_{i,1:\delta-1},\Gamma_{i,\delta}}  v_{\Gamma_{i,\delta}} + A_{\Gamma_{i,\delta},\Gamma_{i,\delta}}v_{\Gamma_{i,\delta}} \end{pmatrix}.
\end{align*}
We obtain the proof by noticing that
\[
u-\Pi_i u=\begin{pmatrix} u_{\Omega_{i,1:\delta-1}}+A_{\Omega_{i,1:\delta-1},\Omega_{i,1:\delta-1}}^{-1} A_{\Omega_{i,1:\delta-1},\Gamma_{i,\delta}}  u_{\Gamma_{i,\delta}} \\ 0_{|\Gamma_{i,\delta}|} \end{pmatrix}.
\]
\end{proof}
Note that $\Pi_i v$ is harmonic for any $v\in\R^{n_{|\Omega_{i,1:\delta}|}}$. 

It is worth mentioning that there is a tight connection between the local SPSD splitting $\widetilde{A}_{ii}$ and the local harmonic operator $\Pi_i$ through the local subdomain matrix $A_{ii}$. We have
\begin{equation}
\label{eq:local_problem_local_spsd_splitting}
  (I-\Pi_i)^\top A_{ii} (I-\Pi_i) = \widetilde{A}_{ii}.
\end{equation}

\subsection{Lifting projection}
Consider the generalized eigenvalue problem
\begin{equation}\label{eq:uppergevp}
D_i A_{ii} D_i u = \lambda A_{ii} u,
\end{equation}
and let $Z_i$ be the matrix whose columns are the $A_{ii}$-orthonormal generalized eigenvectors corresponding to the
eigenvalues $\lambda > \nu$, for some prescribed $\nu > 1$.
Then the projection operator $Z_iZ_i^\top A_{ii}$ satisfies,
\begin{equation}\label{eq:Z_i}
								v^\top (I- A_{ii}Z_iZ_i^\top) D_i A_{ii} D_i (I-Z_iZ_i^\top A_{ii}) v \leq\, \nu\, v^\top A_{ii} v.
\end{equation}
Since $Z_iZ_i^\top A_{ii}$ is actually the projection on the column space of $Z_i$ parallel to the vector space spanned by all eigenvectors whose eigenvalues are lower or equal to $\nu$, eq.~\eqref{eq:Z_i} follows from~\cite[Lemma 7.7]{DolJN15}.


The following theorem characterizes the eigenvalues in \cref{eq:uppergevp}.
\begin{theorem}
The eigenvalues in the generalized eigenvalue problem \cref{eq:uppergevp}
\[
D_i A_{ii} D_i u = \lambda A_{ii} u.
\]
are composed of:
\begin{itemize}
\item 0 with multiplicity $|\Gamma_{i,1:\delta}|$
\item 1 with multiplicity dim$\left(\text{ker}\left(A_{\Gamma_{i,1:\delta},I_i}\right)\right) $ which is at least $| |I_i | - |\Gamma_{i,1:\delta}| |$
\item $\lambda > 1$
\end{itemize}

\end{theorem}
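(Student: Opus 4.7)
The plan is to convert the generalized eigenvalue problem into a standard one and exploit the block structure induced by the Boolean partition of unity $D_i$. Writing $I=I_i$, $\Gamma=\Gamma_{i,1:\delta}$, $p=|I|$, $q=|\Gamma|$, and using the block form
\[
A_{ii}=\begin{pmatrix}A_{II}&A_{I\Gamma}\\A_{\Gamma I}&A_{\Gamma\Gamma}\end{pmatrix},\qquad D_i=\begin{pmatrix}I_p&0\\0&0\end{pmatrix},
\]
one has $D_iA_{ii}D_i=\mathrm{diag}(A_{II},0)$. Since $A_{ii}$ is SPD, so are $A_{II}$ and the Schur complement $S=A_{\Gamma\Gamma}-A_{\Gamma I}A_{II}^{-1}A_{I\Gamma}$, and the problem is equivalent to finding the eigenvalues of $A_{ii}^{-1}D_iA_{ii}D_i$. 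Using the standard block inversion formula for $A_{ii}^{-1}$, a direct computation yields the block lower-triangular form
\[
A_{ii}^{-1}D_iA_{ii}D_i=\begin{pmatrix}I_p+K & 0\\ -S^{-1}A_{\Gamma I} & 0\end{pmatrix},\qquad K:=A_{II}^{-1}A_{I\Gamma}S^{-1}A_{\Gamma I}.
\]
The spectrum is then the union of the spectrum of $I_p+K$ and the eigenvalue $0$ with geometric multiplicity at least $q$ from the trailing zero block.

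From the block triangular structure, eigenvectors of the form $(0,u_\Gamma)^\top$ lie in $\ker(D_iA_{ii}D_i)$, giving eigenvalue $0$ with multiplicity exactly $q=|\Gamma_{i,1:\delta}|$ (note $A_{II}$ is invertible, so no further $0$-eigenvectors can arise from $I_p+K$, as will be confirmed below). For the eigenvalue $1$, I would observe that the multiplicity of $1$ as an eigenvalue of $I_p+K$ equals $\dim\ker K$. Since $A_{II}^{-1}$ and $S^{-1}$ are SPD and in particular invertible, and since $A_{I\Gamma}=A_{\Gamma I}^\top$, the standard fact $\mathrm{rank}(B^\top P B)=\mathrm{rank}(B)$ for SPD $P$ gives $\mathrm{rank}(K)=\mathrm{rank}(A_{\Gamma I})$; hence $\dim\ker K=p-\mathrm{rank}(A_{\Gamma I})=\dim\ker(A_{\Gamma I})$. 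The lower bound $\bigl||I_i|-|\Gamma_{i,1:\delta}|\bigr|$ then follows from $\mathrm{rank}(A_{\Gamma I})\le\min(p,q)$.

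For the remaining eigenvalues, note that $K$ is similar (via $A_{II}^{1/2}$) to the SPSD matrix $BB^\top$ with $B:=A_{II}^{-1/2}A_{I\Gamma}S^{-1/2}$, so all nonzero eigenvalues of $K$ are strictly positive; equivalently, the eigenvalues of $I_p+K$ distinct from $1$ are all strictly greater than $1$, which handles the third item. This also confirms that $I_p+K$ contributes no additional $0$-eigenvalues, closing the case count. The only step requiring some care is the block inversion that produces the explicit form of $A_{ii}^{-1}D_iA_{ii}D_i$ and the rank identity $\mathrm{rank}(K)=\mathrm{rank}(A_{\Gamma I})$; everything else is a direct consequence of the block-triangular structure and the positive definiteness of $A_{II}$ and $S$.
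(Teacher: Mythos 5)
Your proof is correct, but it follows a genuinely different route from the paper's. The paper stays with the generalized eigenvalue problem itself: it writes out the two block equations, exhibits the eigenvectors $(0,u_\Gamma)^\top$ for $\lambda=0$ and $(u_I,0)^\top$ with $u_I\in\ker(A_{\Gamma_{i,1:\delta},I_i})$ for $\lambda=1$, shows conversely that $\lambda=1$ forces $A_{\Gamma_{i,1:\delta},I_i}u_{I_i}=0$, and for $\lambda>0$ eliminates $u_\Gamma$ to get $A_{I_iI_i}u_{I_i}=\lambda\bigl(A_{I_iI_i}-A_{I_i\Gamma}A_{\Gamma\Gamma}^{-1}A_{\Gamma I_i}\bigr)u_{I_i}$ and concludes $\lambda\ge1$ by testing against $u_{I_i}^\top$. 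You instead reduce to the standard eigenvalue problem for $A_{ii}^{-1}D_iA_{ii}D_i$, compute it in closed form via block inversion (note you use the Schur complement $S$ of $A_{II}$, the paper implicitly the one of $A_{\Gamma\Gamma}$), and read the whole spectrum off the block lower-triangular structure together with the similarity of $K$ to the SPSD matrix $BB^\top$. Your version buys a cleaner global accounting: all $p+q$ eigenvalues are located at once, diagonalizability of $K$ makes the multiplicity statements unambiguous, and the identity $\operatorname{rank}(K)=\operatorname{rank}(A_{\Gamma I})$ replaces the paper's two separate inclusion arguments for the eigenvalue $1$; the cost is the block-inversion computation, which must be done carefully. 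One small caveat, shared with the theorem statement itself: $\operatorname{rank}(A_{\Gamma I})\le\min(p,q)$ yields the lower bound $\max(0,\,|I_i|-|\Gamma_{i,1:\delta}|)$ for $\dim\ker(A_{\Gamma I})$, not $\bigl||I_i|-|\Gamma_{i,1:\delta}|\bigr|$; the two coincide only when $|I_i|\ge|\Gamma_{i,1:\delta}|$, which is the regime of interest but worth flagging.
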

\begin{proof}
The generalized eigenvalue problem can be written as
\begin{align*}
A_{I_i,I_i} u_{I_i} &= \lambda (A_{I_i,I_i} u_{I_i} + A_{I_i,\Gamma_{i,1:\delta}} u_{\Gamma_{i,1:\delta}})\\
0 &= \lambda (A_{\Gamma_{i,1:\delta},I_i} u_{I_i} + A_{\Gamma_{i,1:\delta},\Gamma_{i,1:\delta}} u_{\Gamma_{i,1:\delta}})
\end{align*}
Notice that the pair $\left(0, \begin{pmatrix} 0_{I_i}\\ u_{\Gamma_{i,1:\delta}}\end{pmatrix}\right)$ satisfies the generalized eigenvalue problem. Hence, the multiplicity of $0$ is at least $|\Gamma_{i,1:\delta}|$, and since $A_{I_i,I_i}$ is not singular, the multiplicity of $0$ is $|\Gamma_{i,1:\delta}|$.

The pair $\left(1, \begin{pmatrix} u_{I_i} \\ 0_{\Gamma_{i,1:\delta}}\end{pmatrix}\right)$, for $u_{I_i} \in \text{ker}\left(A_{\Gamma_{i,1:\delta},I_i}\right)$, satisfies the generalized eigenvalue problem. Therefore, the minimum multiplicity of the eigenvalue 1 is the dimension of $\text{ker}\left(A_{\Gamma_{i,1:\delta},I_i}\right)$.

Now if $\lambda = 1$, we have by substitution, $A_{I_i,\Gamma_{i,1:\delta}} A_{\Gamma_{i,1:\delta},\Gamma_{i,1:\delta}} A_{\Gamma_{i,1:\delta},I_i} u_{I_i} = 0$, and since $ A_{\Gamma_{i,1:\delta},\Gamma_{i,1:\delta}} $ is SPD $A_{\Gamma_{i,1:\delta},I_i} u_{I_i} = 0$, i.e., $u_{I_i} \in \text{ker}\left(A_{\Gamma_{i,1:\delta},I_i}\right)$.
The two previous arguments show that the eigenvalue 1's multiplicity is the dimension of the kernel of  $A_{\Gamma_{i,1:\delta}, I_i} $.

The matrices defining the generalized eigenvalue problem are symmetric positive semi definite, thus $\lambda$ is non negative.  Now if $\lambda > 0$, by substitution we have
\[
A_{I_i,I_i} u_{I_i} = \lambda (A_{I_i,I_i} - A_{I_i,\Gamma_{i,1:\delta}} A_{\Gamma_{i,1:\delta},\Gamma_{i,1:\delta}}^{-1} A_{\Gamma_{i,1:\delta},I_i} ) u_{I_i}.
\]
Multiplying from the left by $u_{I_i}^\top$ we get
\[
(1-\lambda) u_{I_i}^\top A_{I_i,I_i} u_{I_i} = -\lambda u_{I_i}^\top A_{I_i,\Gamma_{i,1:\delta}} A_{\Gamma_{i,1:\delta},\Gamma_{i,1:\delta}}^{-1} A_{\Gamma_{i,1:\delta},I_i}  u_{I_i}
\]
which by using the SPSD properties of $ A_{I_i,I_i} $ and $A_{\Gamma_{i,1:\delta},\Gamma_{i,1:\delta}}$ shows that $\lambda \geq 1$ and concludes the proof.
\end{proof}
It is worth noting that the generalized eigenvalue problem~\cref{eq:uppergevp} can be interpreted as a singular value problem for $D_i$ where the Euclidean inner product is replaced by the one induced by the SPD matrix $A_{ii}$.
 
\section{Two-level theory}
\label{sec:two-level}
In this section, we set up the elements of the fictitious subspace lemma \cite[Section 7.2.1]{DolJN15} (see~\cite{Nep91} for the original paper as well as~\cite{Griebel:1995:ATA} for a modern reformulation) required to obtain spectral bounds on the two-level preconditioned matrix.

The components of the fictitious subspace lemma are the {\it decomposition spaces},  the {\it bilinear form} defined on the product of the decomposition spaces, the {\it interpolation operator}, the {\it surjectivity and continuity of the interpolation operator} and the {\it stable decomposition}.

\subsection{The coarse space and decomposition spaces}
In this section, we introduce a new spectral coarse space followed by decomposition spaces that we use to define the two-level Schwarz preconditioner. These spaces are composed of local subdomain spaces and the decomposition space associated with the spectral coarse space.

\subsubsection{The spectral coarse space}
The coarse space that we propose is defined as the space spanned by the columns of the matrix $R_0^\top$, where $R_0^\top$ is given as the horizontal concatenation of the $N$ matrices
\[
  R_i^\top D_i  \begin{pmatrix}-A_{\Omega_{i,1:\delta-1},\Omega_{i,1:\delta-1}}^{-1} A_{\Omega_{i,1:\delta-1},\Gamma_{i,\delta}} & Z_{i,\Omega_{i,1:\delta-1}} \\I_{|\Gamma_{i,\delta}|} & Z_{i,\Gamma_{i,\delta}}\end{pmatrix}.
\]
We remind the reader that  $Z_i=\begin{pmatrix} Z_{i,\Omega_{i,1:\delta-1}} \\ Z_{i,\Gamma_{i,\delta}}\end{pmatrix}$ is the matrix whose columns are the selected eigenvectors from the generalized eigenvalue problem~\cref{eq:uppergevp} such that \cref{eq:Z_i} is satisfied.
In other words, the coarse space is
\begin{equation}\label{eq:coarse_space}
 \bigoplus_{i=1}^N R_i^\top D_i  \begin{pmatrix}-A_{\Omega_{i,1:\delta-1},\Omega_{i,1:\delta-1}}^{-1} A_{\Omega_{i,1:\delta-1},\Gamma_{i,\delta}} & Z_{i,\Omega_{i,1:\delta-1}} \\I_{|\Gamma_{i,\delta}|} & Z_{i,\Gamma_{i,\delta}}\end{pmatrix}.
\end{equation}
We let $\Omega_0$ be an indexing set with $|\Omega_0|$ being equal to the dimension of the coarse space.

Given the coarse space basis matrix $R_0^\top$, we define the coarse space operator $A_{00} = R_0 A R_0^\top$ as the Galerkin projection of $A$ onto the coarse space.
We assume in what follows that $R_0^\top$ has full column rank so that $A_{00}$ is SPD.

\subsubsection{The decomposition spaces}
For each $i=0,\ldots, N$, we set $\R^{|\Omega_i|}$ as the decomposition space.
Note that for $i=1, \ldots,N$, the dimension of each decomposition space is the size of the subdomain space.

\subsection{The bilinear form}\label{sec:bilinear_form}
Now that the decomposition spaces are defined, we introduce the following bilinear form on the product space of the decomposition spaces equipped with the Euclidean scalar product:
\begin{align*}
\mathcal{B}:\prod_{i=0}^N \R^{|\Omega_i|} \times \prod_{i=0}^N \R^{|\Omega_i|} &\to \R\\
\left((u_i)_{0\leq i\leq N}, (v_i)_{0\leq i\leq N}\right) &\mapsto (v_i)_{0\leq i\leq N}^\top B (u_i)_{0\leq i\leq N} = \sum_{i=0}^N v_i^\top A_{ii} u_i,
\end{align*}
where $B: \prod_{i=0}^N \R^{|\Omega_i|} \to \prod_{i=0}^N \R^{|\Omega_i|}, \ (u_i)_{0\leq i\leq N} \mapsto (A_{ii}u_i)_{0\leq i\leq N}$.

We note that the operator ${B}$ is SPD. Furthermore, the inverse of ${B}$ can be easily obtained by replacing $A_{ii}$ with $A_{ii}^{-1}$.
\subsection{The interpolation operator}
We introduce in this section the interpolation operator $\mathcal{R}$ from the product of the decomposition spaces to the space $\R^n$ as follows
\begin{align*}
\mathcal{R}:\prod_{i=0}^N \R^{|\Omega_i|} &\to \R^n\\
(u_i)_{0\leq i\leq N} &\mapsto R_0^\top u_0 + \sum_{i=1}^N R_i^\top u_i.
\end{align*}

By using the PoU property, we have $\sum_{i=1}^N R_i^\top (D_i R_i u) = u$, for all $u\in \R^n$. Hence $\mathcal{R}$ is surjective.
In addition, $\mathcal{R}$ is continuous with respect to the norms defined by $\mathcal{B}$ and $A$ on the domain and codomain, respectively.
Indeed, for any choice of a coarse space projection matrix $R_0$, the argument of the number of colors required to color the decomposition that includes the coarse space can be used. This adds a single color to the number of colors required to color the decomposition excluding the coarse space. That is, we have the following inequality
\[
\left(\sum_{i=0}^N R_i^\top u_i \right)^\top A\left(\sum_{i=0}^N R_i^\top u_i \right) \leq (k_c+1) \sum_{i=0}^N u_i^\top A_{ii} u_i.
\]

\begin{remark}
The two-level additive Schwarz preconditioner $M_2^{-1}:=\sum_{i=0}^N R_{i}^\top A_{ii}^{-1}R_i$ can be formulated as
\[
M_2^{-1} = \mathcal{R} {B}^{-1} \mathcal{R}^\top
\]
where $B$ is defined in~\cref{sec:bilinear_form} and $\mathcal{R}^\top$ is the transpose operator of $\mathcal{R}$ which satisfies for any $u\in\R^n, (v_i)_{0\leq i\leq N}\in\prod_{i=0}^N \R^{|\Omega_i|}$
\begin{align*}
(v_i)_{0\leq i\leq N}^\top \mathcal{R}^\top u &= u^\top \mathcal{R} (v_i)_{0\leq i\leq N}\\
                                                                     &= u^\top \sum_{i=0}^NR_i^\top v_i\\
                                                                     &= \sum_{i=0}^N v_i^\top (R_iu).
                                                                     \end{align*}
That is, $\mathcal{R}^\top u = (R_i u)_{0\leq i\leq N}$.
\end{remark}

\subsection{Stable decomposition}
	\label{sec:stable_decomposition_1}
	The stable decomposition property can be stated as follows:
	
\noindent There exists a finite constant $c_l > 0$ such that $\forall u \in \R^n$, $\exists (u_i)_{0\leq i \leq N}\in\prod_{i=0}^N\R^{|\Omega_i|}$ such that $ u = \mathcal{R}  (u_i)_{0\leq i \leq N} = \sum_{i=0}^N R_i^\top u_i$ and
	\[
	  \sum_{i=0}^N u_i^\top A_{ii} u_i \leq c_l \left(\mathcal{R}  (u_i)_{0\leq i \leq N}\right)^\top A \left(\mathcal{R}  (u_i)_{0\leq i \leq N} \right) = u^\top A u.
	\]

The fictitious subspace lemma states that the largest eigenvalue of the preconditioned matrix $\mathcal{R}{B}^{-1}\mathcal{R}^\top A = M_2^{-1}A$ is bounded from above by the continuity constant of the interpolation operator. In addition, the lowest eigenvalue of $M_2^{-1}A$ is bounded from below by the inverse of the stable decomposition constant.
\begin{remark}  
Since we are working in finite dimensions, the question of the existence of the constant $c_l$ is trivial. The main question we address is to find a constant that is independent of the number of subdomains, $N$, or at least does not increase aggressively with $N$.
\end{remark}

The following auxiliary inequality, which is proved in \cite[Lemma 7.12]{DolJN15}, is useful to derive the stable decomposition inequality.
For any $R_0$, $(u_i)_{0\leq i \leq N}$, we have
\begin{equation}\label{eq:aux}
\sum_{i=0}^N u_i^\top A_{ii} u_i\, \leq\, 2 \left(\mathcal{R}(u_i)_{0\leq i \leq N}\right)^\top A \left(\mathcal{R}(u_i)_{0\leq i \leq N}\right) + \left(2k_c + 1\right) \sum_{i=1}^N u_i^\top A_{ii} u_i 
\end{equation}
	
Given $u\in\R^n$, we define the following quantities in the decomposition spaces:
\begin{itemize}
\item $u_i = D_i (I-Z_iZ_i^\top A_{ii}) (I-\Pi_i) R_i u$, for $i = 1,\ldots,N$
\item $u_0$ is defined as the stacking of the vectors
\[
\begin{pmatrix}
P_{\Gamma_{i,\delta}}^\top\\
 Z_{i}^\top  A_{ii}(I-\Pi_i)
 \end{pmatrix}  R_i u,
\]
where $P_{\Gamma_{i,\delta}} = \begin{pmatrix}
  0_{|\Gamma_{i,\delta}|,|\Omega_{i,1:\delta-1}|}  \\ I_{|\Gamma_{i,\delta}|}
  \end{pmatrix} $
such that
\[
R_0^\top u_0 = \sum_{i=1}^N R_i^\top D_i \Pi_i R_i u+ \sum_{i=1}^N R_i^\top D_i Z_i Z_i^\top A_{ii}(I-\Pi_i) R_i u
\]
\end{itemize}

Using these components from the decomposition spaces, we immediately have
\[
u = \sum_{i=0}^N R_i^\top u_i = \mathcal{R}\left(u_i\right)_{0\leq i\leq N},\ \forall u\in \R^n.
\]

As for the stability of this decomposition, the following proposition derives a stable decomposition constant.
\begin{proposition}\label{prop:stable_decomposition_1}
The decomposition introduced in~\cref{sec:stable_decomposition_1} satisfies the stable decomposition property with a constant $c_l = 2+(2k_c+1) k_c \nu$.
\end{proposition}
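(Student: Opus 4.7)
The plan is to invoke the auxiliary inequality~\cref{eq:aux} together with the two key structural facts already established: the spectral estimate~\cref{eq:Z_i} controlling the projection by $Z_i$, and the identity~\cref{eq:local_problem_local_spsd_splitting} linking the harmonic projection $\Pi_i$ to the local SPSD splitting $\widetilde{A}_{ii}$. Since $\mathcal{R}(u_i)_{0\leq i \leq N}=u$, the first term on the right-hand side of~\cref{eq:aux} equals $u^\top A u$, so the whole task reduces to bounding $\sum_{i=1}^N u_i^\top A_{ii} u_i$ by a multiple of $u^\top A u$.

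First I would plug in the definition $u_i = D_i(I-Z_iZ_i^\top A_{ii})(I-\Pi_i)R_i u$ and expand
\[
u_i^\top A_{ii} u_i \;=\; \bigl((I-\Pi_i)R_i u\bigr)^\top (I-A_{ii}Z_iZ_i^\top)\,D_i A_{ii} D_i\,(I-Z_iZ_i^\top A_{ii})\,\bigl((I-\Pi_i)R_i u\bigr).
\]
Applying~\cref{eq:Z_i} with the test vector $v=(I-\Pi_i)R_i u$ yields
\[
u_i^\top A_{ii} u_i \;\leq\; \nu \, \bigl((I-\Pi_i)R_i u\bigr)^\top A_{ii}\,(I-\Pi_i)R_i u.
\]
Next, I would use~\cref{eq:local_problem_local_spsd_splitting}, $(I-\Pi_i)^\top A_{ii}(I-\Pi_i)=\widetilde{A}_{ii}$, to rewrite the right-hand side as $\nu\,(R_i u)^\top \widetilde{A}_{ii}(R_i u)$, and then use~\cref{eq:AtildeiAtildeii}, $R_i^\top \widetilde{A}_{ii} R_i=\widetilde{A}_i$, to obtain
\[
u_i^\top A_{ii} u_i \;\leq\; \nu \, u^\top \widetilde{A}_i\, u.
\]

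Summing over $i=1,\ldots,N$ and invoking the coloring bound from~\cref{lemma:multiplicity} gives
\[
\sum_{i=1}^N u_i^\top A_{ii} u_i \;\leq\; \nu\, u^\top\!\Bigl(\sum_{i=1}^N \widetilde{A}_i\Bigr) u \;\leq\; \nu\,k_c\, u^\top A u.
\]
Substituting this, together with $(\mathcal{R}(u_i))^\top A (\mathcal{R}(u_i)) = u^\top A u$, into the auxiliary inequality~\cref{eq:aux} yields
\[
\sum_{i=0}^N u_i^\top A_{ii} u_i \;\leq\; 2\,u^\top A u + (2k_c+1)\,\nu\, k_c\, u^\top A u \;=\; \bigl(2+(2k_c+1)k_c\nu\bigr)\, u^\top A u,
\]
which is exactly the claimed stable-decomposition bound with $c_l=2+(2k_c+1)k_c\nu$.

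The only subtle point, and thus the main thing I would double-check, is that the specific $u_0$ used in~\cref{sec:stable_decomposition_1} really does not appear explicitly in the argument: the auxiliary inequality already absorbs the coarse component into the $u^\top A u$ term, so bounding $u_0^\top A_{00} u_0$ directly is not needed. Everything else is routine algebraic manipulation, and the constant depends only on the coloring number $k_c$ and the spectral cutoff $\nu$, independently of $N$, as required by the remark preceding the proposition.
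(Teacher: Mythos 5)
Your proof is correct and follows essentially the same route as the paper's: apply \cref{eq:Z_i} to $(I-\Pi_i)R_iu$, convert to $\widetilde{A}_i$ via \cref{eq:local_problem_local_spsd_splitting} and \cref{eq:AtildeiAtildeii}, sum using \cref{lemma:multiplicity}, and close with \cref{eq:aux}. Your closing observation that \cref{eq:aux} absorbs the coarse component, so $u_0^\top A_{00}u_0$ need not be bounded separately, is exactly the role that inequality plays in the paper's argument as well.
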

\begin{proof}
\begin{align*}
u_i^\top A_{ii} u_i &= \left(D_i (I-Z_iZ_i^\top A_{ii})(I-\Pi_i) R_i u\right)^\top A_{ii} \left(D_i (I-Z_iZ_i^\top A_{ii})(I-\Pi_i) R_i u\right)\\
&= \left((I-\Pi_i) R_i u\right)^\top \left((I- A_{ii}Z_iZ_i^\top) D_i A_{ii} D_i (I-Z_iZ_i^\top A_{ii}\right) \left((I-\Pi_i) R_i u\right)\\
&\stackrel{\cref{eq:Z_i}}{\leq} \nu \left((I-\Pi_i) R_i u\right)^\top A_{ii} \left((I-\Pi_i) R_i u\right)\\
&= \nu \left(R_i u\right)^\top \left((I-\Pi_i)^\top  A_{ii} (I-\Pi_i) \right) \left(R_i u\right)\\
&\stackrel{\cref{eq:local_problem_local_spsd_splitting}}{=}\nu \left(R_i u\right)^\top  \widetilde{A}_{ii} \left(R_i u\right)\\
&=\nu  u^\top  R_i^\top \widetilde{A}_{ii}R_i u\\
&\stackrel{\eqref{eq:AtildeiAtildeii}}{=} \nu u^\top  \widetilde{A}_{i}u.
\end{align*}
By summing over $i=1,\ldots,N$ and using \cref{lemma:multiplicity}, we have
\[
\sum_{i=1}^N u_i^\top A_{ii} u_i \leq k_c\, \nu\, u^\top A u.
\]
The last inequality combined with \cref{eq:aux} conclude the proof.
\end{proof}

\begin{theorem}
\label{th:upper_bound_1}
Let $M_2$ be the two-level additive Schwarz preconditioner associated with the coarse space $R_0$ defined in~\cref{eq:coarse_space}. Then, the condition number of the preconditioned matrix $M_2^{-1}A$ satisfies the inequality
\[
\kappa \left( M_2^{-1}A \right) \leq (k_c+1)  (2+(2k_c+1) k_c \nu).
\]
\end{theorem}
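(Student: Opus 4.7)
The plan is to apply the fictitious subspace lemma directly, using the two constants that have already been assembled in the preceding sections. The setup is complete: we have the decomposition spaces $\R^{|\Omega_i|}$ for $0\le i\le N$, the SPD bilinear form defined by the block-diagonal operator $B=\diag(A_{ii})$, the interpolation operator $\mathcal{R}$, and the identification $M_2^{-1}=\mathcal{R}B^{-1}\mathcal{R}^\top$ established in the remark. So the theorem reduces to plugging in the continuity and stable-decomposition constants into the standard spectral bounds.

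First, I would recall the two key estimates. The continuity bound
\[
\left(\mathcal{R}(u_i)_{0\le i\le N}\right)^\top A\left(\mathcal{R}(u_i)_{0\le i\le N}\right)\ \le\ (k_c+1)\sum_{i=0}^N u_i^\top A_{ii} u_i
\]
was derived in the interpolation operator subsection by a coloring argument (the coarse level adds a single color). The stable decomposition, proved in Proposition \ref{prop:stable_decomposition_1}, produces for every $u\in\R^n$ a decomposition $(u_i)_{0\le i\le N}$ with $u=\mathcal{R}(u_i)_{0\le i\le N}$ and
\[
\sum_{i=0}^N u_i^\top A_{ii} u_i\ \le\ \bigl(2+(2k_c+1)k_c\nu\bigr)\,u^\top A u.
\]

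Next I would invoke the fictitious subspace lemma (\cite[Section 7.2.1]{DolJN15}). Writing the preconditioned operator as $M_2^{-1}A=\mathcal{R}B^{-1}\mathcal{R}^\top A$, the lemma states that its largest eigenvalue is bounded above by the continuity constant of $\mathcal{R}:(\prod_i\R^{|\Omega_i|},\mathcal{B})\to(\R^n,A)$, and its smallest eigenvalue is bounded below by the reciprocal of the stable decomposition constant. Thus
\[
\lambda_{\max}\bigl(M_2^{-1}A\bigr)\le k_c+1,\qquad \lambda_{\min}\bigl(M_2^{-1}A\bigr)\ge \frac{1}{2+(2k_c+1)k_c\nu}.
\]
Taking the ratio gives the announced bound. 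A minor point worth mentioning is that $M_2^{-1}A$ is similar to the SPD matrix $A^{1/2}M_2^{-1}A^{1/2}$, so $\kappa(M_2^{-1}A)=\lambda_{\max}/\lambda_{\min}$.

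There is essentially no obstacle here: all the substance has been pushed into the continuity inequality and Proposition \ref{prop:stable_decomposition_1}, together with the assumption that $R_0^\top$ has full column rank (so that $A_{00}$ is SPD and $B$ is genuinely invertible). The only thing to be a little careful about is the direction of the two inequalities in the fictitious subspace lemma and the fact that surjectivity of $\mathcal{R}$ (already verified using the partition of unity) is required for the lower eigenvalue bound to apply. Once these hypotheses are cited, the proof is a one-line multiplication of the two constants.
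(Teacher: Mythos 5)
Your proposal is correct and follows exactly the route the paper takes: the paper's proof is simply to invoke the fictitious subspace lemma with the continuity constant $(k_c+1)$ from the coloring argument and the stable decomposition constant $2+(2k_c+1)k_c\nu$ from Proposition~\ref{prop:stable_decomposition_1}. Your additional remarks on surjectivity, the full column rank of $R_0^\top$, and the similarity to $A^{1/2}M_2^{-1}A^{1/2}$ are sound but not needed beyond what the paper already establishes.
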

\begin{proof}
The proof can be obtained immediately by applying the fictitious subspace lemma with the elements introduced in~\cref{sec:two-level}.
\end{proof}
\section{Shrinking the coarse space}
	\label{sec:shrinking_the_coarse_space}
The coarse space introduced earlier can have a relatively large dimension. Indeed, each subdomain contributes a subspace of dimension $\Gamma_{i,\delta}$ at least. This results in an impractical method. To alleviate this, we judiciously select a subspace of a much smaller dimension that still provides an effective coarse space.

\subsection{The spectral coarse space and decomposition spaces}
\subsubsection{The spectral coarse space}
In this section, we will approximate the left-PoU-weighted local harmonic operator by using a form of a truncated operator decomposition to define the coarse space.
 
Recall that the operator $\Pi_i$ is given in \eqref{eq:HOp} as
\[
\Pi_i = \begin{pmatrix}-A_{\Omega_{i,1:\delta-1},\Omega_{i,1:\delta-1}}^{-1}A_{\Omega_{i,1:\delta-1},\Gamma_{i,\delta}}\\I_{|\Gamma_{i,\delta}|}\end{pmatrix}\begin{pmatrix}0_{{{|\Gamma_{i,\delta}|},|\Omega_{i,1:\delta-1}|}} & I_{{|\Gamma_{i,\delta}|}} \end{pmatrix}.
\] 
By expanding the components in $\Omega_{i,1:\delta-1}$ into components in $\Omega_{I_{i}}$ and $\Gamma_{i,1:\delta-1}$ we can write $\Pi_i$ as
\[
\Pi_i = \begin{pmatrix}
 -A_{I_{i},I_{i}}^{-1} A_{I_{i},\Gamma_{i,1:\delta-1}} S_{\Gamma_{i,1:\delta-1}}^{-1} A_{\Gamma_{i,1:\delta-1},\Gamma_{i,\delta}} \\
S_{\Gamma_{i,1:\delta-1}}^{-1} A_{\Gamma_{i,1:\delta-1},\Gamma_{i,\delta}}\\
 I_{|\Gamma_{i,\delta}|}
 \end{pmatrix}
 \begin{pmatrix}
 0_{{|\Gamma_{i,\delta}|},{|I_{i}|}}\ 0_{{{|\Gamma_{i,\delta}|},|\Gamma_{i,1:\delta-1}|}} \ I_{{|\Gamma_{i,\delta}|}}
  \end{pmatrix}.
\]

Note that since $D_i$ is Boolean it takes the value $1$ on $I_{i}$ and $0$ elsewhere, so that we have 
\[D_i\Pi_i {}= \begin{pmatrix}
 -A_{I_{i},I_{i}}^{-1} A_{I_{i},\Gamma_{i,1:\delta-1}} S_{\Gamma_{i,1:\delta-1}}^{-1} A_{\Gamma_{i,1:\delta-1},\Gamma_{i,\delta}} \\
0_{|\Gamma_{i,1:\delta-1}|,|\Gamma_{i,1:\delta-1}|}\\
 0_{|\Gamma_{i,\delta}|,|\Gamma_{i,\delta}|}
 \end{pmatrix}
 \begin{pmatrix}
 0_{{|\Gamma_{i,\delta}|},{|I_{i}|}} \ 0_{{{|\Gamma_{i,\delta}|},|\Gamma_{i,1:\delta-1}|}}\ I_{{|\Gamma_{i,\delta}|}}
  \end{pmatrix}.
\]

\begin{proposition}\label{prop:gevpsvd}
Consider the generalized eigenvalue decomposition
\begin{equation}\label{eq:gevpsvd}
\Pi_i^\top D_i A_{ii} D_i \Pi_i w = \lambda^2 A_{ii} w 
\end{equation}
and let $W_i$ be the matrix whose columns correspond to the $A_{ii}$-normalized generalized eigenvectors which form an $A_{ii}$-orthonormal basis ordered by decreasing order of the eigenvalues.
Let $W_i = [W_i^{(1)}, W_i^{(2)}]$ where 
$W_i^{(1)}$ corresponds to the set of generalized eigenvectors associated with the eigenvalues that are larger than $\tau^2$ for some prescribed number $\tau>0$.
We have
\begin{equation*}
\Pi_i^\top D_i A_{ii} D_i \Pi_i W_i^{(j)} = A_{ii} W_i^{(j)} \Lambda_j^2
\end{equation*}
where $\Lambda_j^2$ corresponds to the diagonal matrix of the eigenvalues associated with the eigenvectors $W_i^{(j)}$ for $j=1,2$.
Furthermore, since $W_i$ is $A_{ii}$-orthogonal, we have
\[
W_i^{(1)} W_i^{(1)\top}A_{ii} + W_i^{(2)} W_i^{(2)\top}A_{ii} = I.
\]
Now, let 
$\Xi_i^{(1)} = D_i \Pi_i W_i^{(1)} W_i^{(1)\top}A_{ii}$ and $\Xi_i^{(2)} = D_i \Pi_i W_i^{(2)} W_i^{(2)\top}A_{ii}$, 
we have
\begin{equation}\label{eq:svdsplitting}
D_i \Pi_i = \Xi_i^{(1)} + \Xi_i^{(2)}.
\end{equation}
Furthermore, the following holds
\begin{align}
\Xi_i^{(2)\top} A_{ii} \Xi_i^{(1)} &= 0 \label{eq:gevpsplitting1}\\
v^\top \Xi_i^{(2)\top} A_{ii} \Xi_i^{(2)} v &\leq \tau^2 v^\top A_{ii} v \label{eq:gevpsplitting2}
\end{align}
\end{proposition}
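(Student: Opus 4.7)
The proof is essentially a linear-algebra calculation, and the plan is to exploit that the pair $(\Pi_i^\top D_i A_{ii} D_i \Pi_i,\; A_{ii})$ is a regular symmetric pencil (SPSD on the left, SPD on the right), so standard simultaneous diagonalization applies and gives a basis $W_i$ that is $A_{ii}$-orthonormal. Writing $W_i=[W_i^{(1)},W_i^{(2)}]$ according to the cutoff $\tau^2$, the eigenrelation
\[
\Pi_i^\top D_i A_{ii} D_i \Pi_i W_i^{(j)} = A_{ii} W_i^{(j)} \Lambda_j^2
\]
is nothing more than the block form of the GEVP, and will be used repeatedly.

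From $A_{ii}$-orthonormality $W_i^\top A_{ii} W_i = I$ I get $W_i^{-1}=W_i^\top A_{ii}$, hence $W_i W_i^\top A_{ii}=I$; splitting the product blockwise yields $W_i^{(1)}W_i^{(1)\top}A_{ii}+W_i^{(2)}W_i^{(2)\top}A_{ii}=I$. Left-multiplying by $D_i\Pi_i$ gives the splitting \eqref{eq:svdsplitting} immediately.

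For the cross term $\Xi_i^{(2)\top} A_{ii}\Xi_i^{(1)}$, I would expand using the two definitions, pull $\Pi_i^\top D_i A_{ii} D_i \Pi_i W_i^{(1)}$ to $A_{ii} W_i^{(1)}\Lambda_1^2$ via the GEVP, and then kill it with the $A_{ii}$-orthogonality $W_i^{(2)\top}A_{ii}W_i^{(1)}=0$. The same substitution gives
\[
\Xi_i^{(2)\top} A_{ii}\Xi_i^{(2)} = A_{ii}W_i^{(2)}\Lambda_2^2 W_i^{(2)\top}A_{ii},
\]
since $W_i^{(2)\top}A_{ii}W_i^{(2)}=I$ contracts the middle.

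For the final bound, the key observation is that $P:=W_i^{(2)}W_i^{(2)\top}A_{ii}$ is an $A_{ii}$-orthogonal projector ($P^2=P$ and $P^\top A_{ii}=A_{ii}P$). Setting $u=W_i^{(2)\top}A_{ii}v$, I get $v^\top \Xi_i^{(2)\top} A_{ii}\Xi_i^{(2)} v = u^\top \Lambda_2^2 u \le \tau^2 u^\top u = \tau^2 v^\top A_{ii}Pv$, and then use the $A_{ii}$-orthogonal projector property $v^\top A_{ii}Pv=\|Pv\|_{A_{ii}}^2 \le \|v\|_{A_{ii}}^2 = v^\top A_{ii} v$ to conclude \eqref{eq:gevpsplitting2}. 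There is no serious obstacle here; the only thing to keep straight is the book-keeping between left and right multiplications by $A_{ii}$ when translating $A_{ii}$-orthogonality into matrix identities, and the fact that the bound $\tau^2$ must be combined with a projection inequality rather than simply $W_i^{(2)}W_i^{(2)\top}A_{ii}\preceq I$ (which is not a standard ordering since the matrix is non-symmetric).
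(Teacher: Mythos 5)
Your proposal is correct and follows essentially the same route as the paper: simultaneous diagonalization of the pencil for the splitting, the GEVP relation plus $A_{ii}$-orthogonality for the cross term, and a bound on the $W_i^{(2)}$-component for \eqref{eq:gevpsplitting2}. Your final step via the $A_{ii}$-orthogonal projector $P=W_i^{(2)}W_i^{(2)\top}A_{ii}$ is just a repackaging of the paper's explicit decomposition $v=W_i^{(1)}w_1+W_i^{(2)}w_2$ and the Pythagorean inequality, so there is no substantive difference.
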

\begin{proof}
The generalized eigenvalue problem~\cref{eq:gevpsvd} is symmetric and the right-hand side matrix is SPD. Hence, the set of generalized eigenvectors, columns of $W_i$, forms an $A_{ii}$-orthonormal basis, i.e., $W_i^\top A_{ii} W_i = I$, so that $W_i^{-1}=W_i^\top A_{ii}$. That is:
\[
W_i^\top A_{ii} W_i = W_i W_i^\top A = I.
\]
Moreover, using the splitting $W_i = [W_i^{(1)}, W_i^{(2)}]$, we have
\[
[W_i^{(1)}, W_i^{(2)}] [W_i^{(1)}, W_i^{(2)}]^\top A_{ii} = W_i^{(1)} W_i^{(1)\top}A_{ii} + W_i^{(2)} W_i^{(2)\top}A_{ii} = I.
\]
Therefore,
\[
D_i \Pi_i = \Xi_i^{(1)} + \Xi_i^{(2)}\,.
\]

Using the $(\Pi_i^\top D_i A_{ii}D_i\Pi_i)$-orthogonality of the eigenvectors, we have 
\[
W_i^{(1)\top} \Pi_i^\top D_i A_{ii} D_i \Pi_i W_i^{(2)} = 0.
\]
Left and right multiplications by $A_{ii} W_i^{(1)}$ and $W_i^{(2)\top}A_{ii}$, respectively, yield the $A_{ii}$-orthogonality between $\Xi_i^{(1)}$ and $\Xi_i^{(2)}$:
\[
A_{ii} W_i^{(1)} W_i^{(1)\top} \Pi_i^\top D_i A_{ii} D_i \Pi_i W_i^{(2)} W_i^{(2)\top}A_{ii} = \Xi_i^{(1)\top} A_{ii} \Xi_i^{(2)}= 0,
\] 
which proves \cref{eq:gevpsplitting1}.
To prove \cref{eq:gevpsplitting2} we first note that since the columns of $W_i$ form a basis, any vector $v\in\R^{|\Omega_i|}$ can be written uniquely as $v = W_i^{(1)} w_1 + W_i^{(2)} w_2$, where $w_j=W_i^{(j)} A_{ii}v$ for $j=1,2$.
Therefore, we have
\begin{align*}
v^\top \Xi_i^{(2)\top} A_{ii} \Xi_i^{(2)} v &= (D_i \Pi_i W_i^{(2)} W_i^{(2)\top}A_{ii}v)^\top A_{ii} D_i \Pi_i W_i^{(2)} W_i^{(2)\top}A_{ii} v\\
&=  (D_i \Pi_i W_i^{(2)} w_2)^\top A_{ii} D_i \Pi_i W_i^{(2)} w_2\\
&=   w_2^\top W_i^{(2)\top} \Pi_i^\top D_i A_{ii} D_i \Pi_i W_i^{(2)} w_2\\
&= w_2^\top \Lambda_2^2 w_2\\
&\leq \tau^2 w_2^\top w_2\\
&= \tau^2 w_2^\top W_i^{(2)\top} A_{ii} W_i^{(2)} w_2\\
&\leq \tau^2 \left(w_2^\top W_i^{(2)\top} A_{ii} W_i^{(2)} w_2 + w_1^\top W_i^{(1)\top} A_{ii} W_i^{(1)} w_1\right)\\
&= \tau^2 v^\top A_{ii} v
\end{align*} 
\end{proof}
Note that the splitting in~\cref{eq:svdsplitting} can be seen as an SVD splitting with respect to the scalar product induced by $A_{ii}$ for the domain and codomain of $D_i \Pi_i$.

Let $\Pi_i^{(1)} = \Pi_i - D_i \Pi_i + \Xi_i^{(1)}$ and $\Pi_i^{(2)} = \Xi_i^{(2)}$, we have $\Pi_i = \Pi_i^{(1)} + \Pi_i^{(2)}$.
We define the coarse space as the space spanned by the columns of the matrix $R_0^\top$, where $R_0^\top$ is the horizontal concatenation of $R_i^\top D_i\begin{pmatrix}\Pi_iW_i^{(1)}, Z_i\end{pmatrix}$.
In other words, the coarse space is
\begin{equation}\label{eq:coarse_space_2}
\bigoplus_{i=1}^N R_i^\top D_i\begin{pmatrix}\Pi_iW_i^{(1)}, Z_i\end{pmatrix}.
\end{equation}
We let $\Omega_0$ be an indexing set with element count equal to the number of columns in $R_0^\top$.

The decomposition spaces, the bilinear form, and the interpolation operator can be defined exactly in the same way as defined earlier in~\cref{sec:two-level} and they satisfy the same properties. It remains to check the stability of the decomposition. This is carried out in the following section.

\subsection{Stable decomposition}\label{sec:stable_decomposition_2} 
Given $u\in\R^n$, we define the following quantities in the decomposition spaces:
\begin{itemize}
\item $u_i = D_i (I-Z_iZ_i^\top A_{ii}) (I-\Pi_i^{(1)}) R_i u$, for $i = 1,\ldots,N$
\item $u_0$ is defined as the vertical stacking, for $1\le i\le N,$ of the vectors
\[
\begin{pmatrix}
 W_i^{(1)\top} A_{ii}\\
 Z_{i}^\top  A_{ii}(I-\Pi_i^{(1)})
 \end{pmatrix}  R_i u.
\]
\end{itemize}
Note that $R_0^\top u_0 =\sum_{i=1}^N R_i^\top D_i \left(\Pi_i^{(1)} + Z_i Z_i^\top A_{ii}(I-\Pi_i^{(1)})\right) R_i u$.
This can be derived as follows where we exploit in the second and fifth lines the Boolean property of the PoU matrices $D_i$, that is, $D_i^2 = D_i$:
\begin{align*}
R_0^\top u_0 &= \sum_{i=1}^N R_i^\top D_i \left(\Pi_i W_i^{(1)}W_i^{(1)\top}A_{ii} + Z_i Z_i^\top A_{ii}(I-\Pi_i^{(1)})\right) R_i u\\
                      &=\sum_{i=1}^N R_i^\top D_i \left(D_i \Pi_i W_i^{(1)}W_i^{(1)\top}A_{ii} + Z_i Z_i^\top A_{ii}(I-\Pi_i^{(1)})\right) R_i u\\
                      &= \sum_{i=1}^N R_i^\top D_i \left(\Xi_i^{(1)} + Z_i Z_i^\top A_{ii}(I-\Pi_i^{(1)})\right) R_i u\\
                      &= \sum_{i=1}^N R_i^\top D_i \left(D_i \Pi_i^{(1)} + Z_i Z_i^\top A_{ii}(I-\Pi_i^{(1)})\right) R_i u\\
                      &= \sum_{i=1}^N R_i^\top D_i \left(\Pi_i^{(1)} + Z_i Z_i^\top A_{ii}(I-\Pi_i^{(1)})\right) R_i u.
\end{align*}

Using these components from the decomposition spaces, we immediately have
\[
u = \sum_{i=0}^N R_i^\top u_i = \mathcal{R}\left(u_i\right)_{0\leq i\leq N},\ \forall u\in \R^n.
\]

Regarding the stability of this decomposition, the following Proposition provides the stability constant.
\begin{proposition}
The decomposition introduced in~\cref{sec:stable_decomposition_2} satisfies the stable decomposition property with a constant 
\[c_l=\left(2+\left(2k_c+1\right)\nu\left(k_c +\lambda_*\left(2\tau + \tau^2\right)\right)\right),
\]
where $\lambda_*$ is the largest eigenvalue of the generalized eigenvalue problem 
\begin{equation}\label{eq:lambda_star}
 \sum_{i=1}^N R_i^\top A_{ii} R_i u = \lambda A u.
 \end{equation}
\end{proposition}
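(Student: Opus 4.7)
The plan is to mirror the proof of Proposition~4.7, with an extra bookkeeping step to handle the replacement of $\Pi_i$ by $\Pi_i^{(1)}$. The essential algebraic observation is that $\Pi_i^{(1)} = \Pi_i - \Xi_i^{(2)}$, since $\Pi_i^{(1)} = \Pi_i - D_i\Pi_i + \Xi_i^{(1)}$ and $D_i \Pi_i = \Xi_i^{(1)} + \Xi_i^{(2)}$ by~\cref{eq:svdsplitting}. Consequently, $(I - \Pi_i^{(1)}) R_i u = (I - \Pi_i) R_i u + \Xi_i^{(2)} R_i u$, which is what connects the present setting to the analysis already carried out in~\cref{sec:stable_decomposition_1}.

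Starting from $u_i = D_i(I - Z_iZ_i^\top A_{ii})(I - \Pi_i^{(1)}) R_i u$, I would repeat the first three lines of the proof of~\cref{prop:stable_decomposition_1} verbatim (applying~\cref{eq:Z_i}) to obtain
\[
u_i^\top A_{ii} u_i \leq \nu\, \bigl\| (I - \Pi_i^{(1)}) R_i u \bigr\|_{A_{ii}}^2.
\]
Then I would expand $\|(I - \Pi_i) R_i u + \Xi_i^{(2)} R_i u\|_{A_{ii}}^2$ into three pieces and bound each one separately. For the diagonal piece involving $(I-\Pi_i)$, I use~\cref{eq:local_problem_local_spsd_splitting} together with~\cref{lemma:multiplicity} to get $\sum_{i=1}^N \|(I-\Pi_i) R_i u\|_{A_{ii}}^2 \leq k_c\, u^\top A u$, exactly as in~\cref{prop:stable_decomposition_1}. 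For the diagonal piece involving $\Xi_i^{(2)}$, I invoke~\cref{eq:gevpsplitting2} to get the local bound $\|\Xi_i^{(2)} R_i u\|_{A_{ii}}^2 \leq \tau^2 \|R_i u\|_{A_{ii}}^2$, then sum and use the definition of $\lambda_*$ in~\cref{eq:lambda_star} to conclude $\sum_{i=1}^N \|\Xi_i^{(2)} R_i u\|_{A_{ii}}^2 \leq \tau^2 \lambda_*\, u^\top A u$.

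The cross term is the step that must be handled carefully to obtain the advertised constant. Rather than a Cauchy--Schwarz over $i$ (which would yield a suboptimal $\sqrt{k_c \lambda_*}$), I would use the fact from~\cref{lemma:projection} that $I-\Pi_i$ is an $A_{ii}$-orthogonal projection and therefore $\|(I-\Pi_i) R_i u\|_{A_{ii}} \leq \|R_i u\|_{A_{ii}}$, which combined with $\|\Xi_i^{(2)} R_i u\|_{A_{ii}} \leq \tau \|R_i u\|_{A_{ii}}$ gives the pointwise bound $2\|(I-\Pi_i) R_i u\|_{A_{ii}} \|\Xi_i^{(2)} R_i u\|_{A_{ii}} \leq 2\tau \|R_i u\|_{A_{ii}}^2$. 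Summing over $i$ and invoking~\cref{eq:lambda_star} again yields $2\tau \lambda_*\, u^\top A u$. Collecting the three pieces produces
\[
\sum_{i=1}^N u_i^\top A_{ii} u_i \leq \nu\bigl(k_c + \lambda_*(2\tau + \tau^2)\bigr) u^\top A u.
\]

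Finally, plugging this into the auxiliary inequality~\cref{eq:aux} immediately yields the stated constant $c_l = 2 + (2k_c+1)\nu\bigl(k_c + \lambda_*(2\tau + \tau^2)\bigr)$. The main obstacle, as noted, is the cross-term estimate: one must resist the reflex of summing Cauchy--Schwarz first, and instead combine the two pointwise bounds before summing, so that both factors are controlled by $\|R_i u\|_{A_{ii}}^2$ and the $\lambda_*$ bound can be applied once. Everything else is a straightforward adaptation of~\cref{prop:stable_decomposition_1}.
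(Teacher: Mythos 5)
Your proposal is correct and follows essentially the same route as the paper: the paper also expands $(I-\Pi_i^{(1)})R_iu$ against $(I-\Pi_i)R_iu$ and $\Pi_i^{(2)}R_iu=\Xi_i^{(2)}R_iu$, bounds the cross term per subdomain using Cauchy--Schwarz together with the $A_{ii}$-orthogonal projection property of $I-\Pi_i$ and \cref{eq:gevpsplitting2} before summing, and only then invokes $\lambda_*$ and \cref{eq:aux}. The only cosmetic difference is that you apply \cref{eq:Z_i} at the start rather than at the end.
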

\begin{proof}
Let $u\in\R^n$. We have
\begin{align*}
 u^\top \widetilde{A}_i u &\stackrel{\cref{eq:local_problem_local_spsd_splitting}}{=}((I-\Pi_i) R_i u)^\top A_{ii} ((I-\Pi_i) R_i u) \\
 &= ((I-\Pi_i^{(1)} - \Pi_i^{(2)}) R_i u)^\top A_{ii} ( (I-\Pi_i^{(1)} - \Pi_i^{(2)}) R_i u)\\
 &= ( (I-\Pi_i^{(1)}) R_i u)^\top A_{ii} ( (I-\Pi_i^{(1)}) R_i u)\\
 &\quad + ( \Pi_i^{(2)} R_i u)^\top A_{ii} ( \Pi_i^{(2)} R_i u) - 2( (I-\Pi_i^{(1)}) R_i u)^\top A_{ii} ( \Pi_i^{(2)} R_i u)\\
 &= ( (I-\Pi_i^{(1)}) R_i u)^\top A_{ii} ( (I-\Pi_i^{(1)}) R_i u)\\
 &\quad +( \Pi_i^{(2)} R_i u)^\top A_{ii} ( \Pi_i^{(2)} R_i u)\\
 &\quad - 2( (I-\Pi_i^{(1)} - \Pi_i^{(2)}) R_i u)^\top A_{ii} ( \Pi_i^{(2)} R_i u)- 2( \Pi_i^{(2)} R_i u)^\top A_{ii} ( \Pi_i^{(2)} R_i u)\\
 &= ( (I-\Pi_i^{(1)}) R_i u)^\top A_{ii} ( (I-\Pi_i^{(1)}) R_i u)\\
 &\quad - ( \Pi_i^{(2)} R_i u)^\top A_{ii} ( \Pi_i^{(2)} R_i u)- 2( (I-\Pi_i) R_i u)^\top A_{ii} ( \Pi_i^{(2)} R_i u).
 \end{align*}
 Therefore, 
\begin{multline*}
 ( (I{-}\Pi_i^{(1)}) R_i u)^\top{} A_{ii}{} ( (I{-}\Pi_i^{(1)}) R_i u)= u^\top \widetilde{A}_i u\\
  \quad + 2((I-\Pi_i)R_i u)^\top  A_{ii}  (\Pi_i^{(2)} R_i u)\\
  \quad + (R_i u)^\top (\Pi_i^{(2)\top}   A_{ii}  \Pi_i^{(2)}) (R_i u).
\end{multline*}
By using Cauchy--Schwarz and \cref{prop:gevpsvd}, we have
\begin{multline*}
 ( (I{-}\Pi_i^{(1)}) R_i u)^\top{} A_{ii}{} ( (I{-}\Pi_i^{(1)}) R_i u)\leq u^\top \widetilde{A}_i u\\
 \quad + 2\sqrt{((I-\Pi_i)R_i u)^\top A_{ii}  ((I-\Pi_i)R_i u)} \sqrt{( \Pi_i^{(2)} R_i u)^\top A_{ii} (\Pi_i^{(2)}R_iu)}\\
  \quad + \tau^2(R_i u)^\top A_{ii}  (R_i u).
\end{multline*}
Since $(I-\Pi_i)$ is a $A_{ii}$-orthogonal projection and reapplying the inequality from \cref{prop:gevpsvd}, we have
\begin{multline*}
 ( (I{-}\Pi_i^{(1)}) R_i u)^\top{} A_{ii}{} ( (I{-}\Pi_i^{(1)}) R_i u)\leq u^\top \widetilde{A}_i u\\
 \quad + 2\sqrt{(R_i u)^\top A_{ii}  (R_i u)} \sqrt{\tau^2(R_i u)^\top A_{ii}  (R_i u)}\\
  \quad + \tau^2(R_i u)^\top A_{ii}  (R_i u).
\end{multline*}
Therefore,
\begin{equation*}
 ( (I{-}\Pi_i^{(1)}) R_i u)^\top A_{ii} ( (I{-}\Pi_i^{(1)}) R_i u)\leq u^\top \widetilde{A}_i u + (2\tau + \tau^2) (R_i u)^\top A_{ii}  (R_i u).
\end{equation*}
By summing over $i$, we have
\begin{align*}
 \sum_{i=1}^N ( (I-\Pi_i^{(1)}) R_i u)^\top A_{ii} ( (I-\Pi_i^{(1)}) R_i u) & \leq \sum_{i=1}^N{} u^\top \widetilde{A}_i u + (\tau^2+2 \tau) (R_i u)^\top A_{ii}  (R_i u)\\
 & \leq k_c u^\top A u + (\tau^2+2\tau) \sum_{i=1}^N (R_i u)^\top A_{ii}  (R_i u).
\end{align*}

By using the definition of $\lambda_*$, we can write
\begin{align*}
 \sum_{i=1}^N ((I-\Pi_i^{(1)}) R_i u)^\top A_{ii} ((I-\Pi_i^{(1)}) R_i u) & \leq k_c u^\top A u + (2\tau+\tau^2) \lambda_* \tau u^\top A u \\
 & \leq (k_c +\lambda_* (\tau^2+2\tau)) u^\top A u.
\end{align*}

Choosing $u_i = D_i (I-Z_iZ_i^\top A_{ii}) (I-\Pi_i^{(1)}) R_i u$, for $i=1,\ldots,N$, as in the proposed decomposition and applying~\cref{eq:Z_i}
\begin{align}\label{eq:cl}
\begin{split}
\sum_{i=1}^N u_i^\top A_{ii} u_i &\leq \nu \sum_{i=1}^N \left((I-\Pi_i^{(1)}) R_i u\right)^\top A_{ii} \left((I-\Pi_i^{(1)}) R_i u\right)\\
&\leq \nu \left(k_c +\lambda_* (\tau^2+2\tau)\right) u^\top A u.
\end{split}
\end{align}
The last inequality combined with~\cref{eq:aux} conclude the proof. 
\end{proof}

\begin{theorem}
\label{th:upper_bound_2}
Let $M_2$ be the two-level additive Schwarz preconditioner associated with the coarse space defined in~\cref{eq:coarse_space_2}. Then, the condition number of the preconditioned matrix $M_2^{-1}A$ satisfies the inequality
\[
\kappa \left( M_2^{-1}A \right) \leq \left(k_c+1\right) \left(2+\left(2k_c+1\right)\nu\left(k_c +\lambda_*\left(2\tau + \tau^2\right)\right)\right)
\]
\end{theorem}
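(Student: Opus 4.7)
The plan is to invoke the fictitious subspace lemma exactly as was done in the proof of \cref{th:upper_bound_1}, since the decomposition spaces, the bilinear form, and the interpolation operator $\mathcal{R}$ are defined identically to those in \cref{sec:two-level}; only the coarse space $R_0$ (and hence $A_{00}$ and the $0$-component of the decomposition) has changed. The lemma reduces the condition-number bound to two ingredients: an upper bound on the continuity constant of $\mathcal{R}$ as a map from the product space equipped with $\mathcal{B}$ to $\R^n$ equipped with the $A$-inner product, and a lower bound on the stable decomposition constant $c_l$.

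First, I would note that the continuity constant is unaffected by the choice of coarse space. The argument given in the paragraph defining $\mathcal{R}$ only uses the coloring argument for the $N$ overlapping subdomains plus one extra color for the coarse space, yielding
\[
\left(\sum_{i=0}^N R_i^\top u_i \right)^\top A\left(\sum_{i=0}^N R_i^\top u_i \right) \leq (k_c+1) \sum_{i=0}^N u_i^\top A_{ii} u_i,
\]
so the continuity constant is still $(k_c+1)$. By the fictitious subspace lemma this gives $\lambda_{\max}(M_2^{-1}A)\leq k_c+1$.

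Second, I would invoke the preceding proposition (stable decomposition for the shrunk coarse space, \cref{sec:stable_decomposition_2}), which establishes that for every $u\in\R^n$ the explicit decomposition $(u_i)_{0\leq i\leq N}$ satisfies $u = \mathcal{R}(u_i)_{0\leq i\leq N}$ together with
\[
\sum_{i=0}^N u_i^\top A_{ii} u_i \leq \left(2+(2k_c+1)\nu\left(k_c + \lambda_*(2\tau+\tau^2)\right)\right) u^\top A u.
\]
Hence $c_l = 2+(2k_c+1)\nu(k_c + \lambda_*(2\tau+\tau^2))$, and the fictitious subspace lemma gives $\lambda_{\min}(M_2^{-1}A) \geq 1/c_l$.

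Combining the two bounds yields
\[
\kappa(M_2^{-1}A) \leq (k_c+1)\left(2+(2k_c+1)\nu\left(k_c + \lambda_*(2\tau+\tau^2)\right)\right),
\]
which is the claimed inequality. There is no real obstacle here: all the work has already been done in the propositions on the bilinear form, the surjectivity/continuity of $\mathcal{R}$, and the stable decomposition. The only thing worth double-checking is that $R_0^\top$ has full column rank so that $A_{00}=R_0 A R_0^\top$ is SPD and the bilinear form $\mathcal{B}$ is well defined; this is the standing assumption already made when introducing the coarse space in \cref{eq:coarse_space_2}.
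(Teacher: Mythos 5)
Your proposal is correct and follows exactly the paper's own argument, which likewise invokes the fictitious subspace lemma with the continuity constant $(k_c+1)$ of the interpolation operator and the stable decomposition constant from the preceding proposition in \cref{sec:stable_decomposition_2}; the paper simply states this in one line while you spell out the two ingredients explicitly. No gaps.
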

\begin{proof}
The proof can be obtained immediately by applying the fictitious subspace lemma with the elements introduced in~\cref{sec:shrinking_the_coarse_space}.
\end{proof}
Note that all terms in the upper bound on the condition number in~\cref{th:upper_bound_2} are independent of $N$ except for the term $\lambda_*(2\tau+\tau^2)$. Indeed, the value of $\lambda_*$, the largest eigenvalue in~\cref{eq:lambda_star} might increase when $N$ increases. But, being multiplied by $(2\tau + \tau^2)$, the impact of $\lambda_*$ can be remedied by choosing small value for $\tau$.
Numerical experiments show that the eigenvalues in \cref{eq:gevpsvd} decay fast and this decay gets even faster with increasing $\delta$, the number of layers in the overlapping region.
It is then a tradeoff.

\section{Extension to nonsymmetric matrices}
\label{sec:nonsymmetric}
In the previous section, we showed how to construct a spectral coarse space by solving two generalized eigenvalue problems~\cref{eq:uppergevp,eq:gevpsvd} and selecting the eigenvectors corresponding to the dominant eigenvalues.
We noted that if we replace the Euclidean scalar product with that induced by $A_{ii}$, the selected eigenvectors in~\cref{eq:uppergevp,eq:gevpsvd} ($Z_i, W_i^{(1)}$) can be interpreted as the right singular vectors of $D_i$ and $D_i\Pi_i$ corresponding to the singular values larger than $\sqrt{\nu}$ and $\tau$, respectively.
Note also that the vectors added to the coarse space are $D_i Z_i$ and $D_i\Pi_i W_i^{(1)}$, which corresponds to the image of the {\it right singular vectors}, that is, the space spanned by the {\it left singular vectors}.


If $A$ is nonsymmetric, the local matrices may not induce a scalar product. 
One way to generalize the approach proposed previously for SPD matrices to nonsymmetric ones can be achieved by considering the singular value decomposition of $D_i$ and $D_i\Pi_i$  with respect to the Euclidean scalar product.

Since $\|D_i\|_2 = 1$ ($D_i$ is Boolean), all singular values of $D_i$ are smaller than $1$ which is smaller than $\sqrt{\nu} > 1$.
Hence, $D_i$ will not contribute anything to the coarse space in the nonsymmetric case.
The contribution from $D_i\Pi_i$ will consist of its left singular vectors associated with singular values larger than $\tau$.


\section{Numerical results}
\label{sec:numerical_experiments}
In this section, we validate the two-level method proposed earlier for solving
linear systems arising from the discretization of various PDEs. We rely on
PETSc~\cite{PETSc} as the linear algebra backend, FreeFEM~\cite{Hec12} for the
discretization, and the PCHPDDM~\cite{JolRZ21} infrastructure for benchmarking
different two-level domain decomposition preconditioners. 

We note that when the coefficient matrix is nonsymmetric, we only compute the left singular vectors associated with the largest singular values of $D_i\Pi_i$, cf.\ \cref{sec:nonsymmetric}. 
Moreover, when the coefficient matrix is SPD, we observed through a variety of numerical tests that the contribution from eigenvectors associated with the generalized eigenvalue problem \cref{eq:uppergevp} had no impact on improving the numerical effectiveness of the proposed method. Therefore, when the matrix is SPD, the PCHPDDM implementation used to generate the numerical experiments of the proposed method does not consider the generalized eigenvalue problem \cref{eq:uppergevp}.


\subsection{Impact of the number of layers}
\Cref{fig:decay_eigenvalues} demonstrates that increasing the number of overlapping layers $\delta$, speeds up the decay of eigenvalues and singular values in the generalized eigenvalue problem \cref{eq:gevpsvd} and the singular value decomposition of $D_i\Pi_i$, cf.\ \cref{sec:nonsymmetric}.
This behavior was encountered across all the numerical experiments that we carried out. 
Given a fixed threshold value $\tau$, increasing the number of overlapping layers decreases the number of contributed vectors to the coarse space at the expense of an increase in subdomain-wise local computational costs.

\pgfplotstableread{diffusion_overlap_1.dat}\diffusionA
\pgfplotstableread{diffusion_overlap_3.dat}\diffusionB
\pgfplotstableread{diffusion_overlap_5.dat}\diffusionC
\pgfplotstableread{stokes_overlap_1.dat}\stokesA
\pgfplotstableread{stokes_overlap_3.dat}\stokesB
\pgfplotstableread{stokes_overlap_5.dat}\stokesC
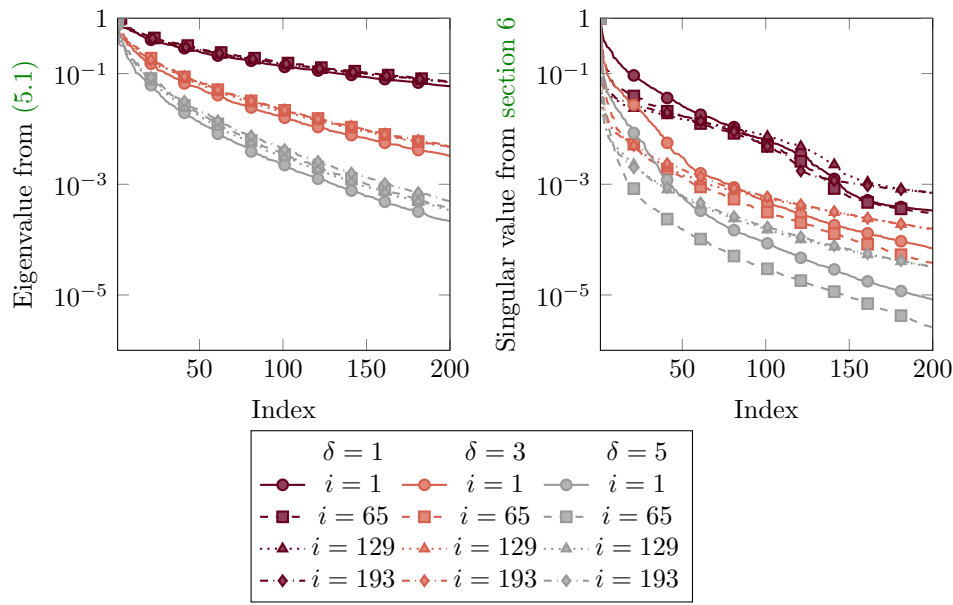
\begin{figure}[H]
\begin{tikzpicture}
\begin{semilogyaxis}[name=ax1,height=6cm,width=6cm,
    legend style={
    at={(0.4,-0.5)}, 
    anchor=west, cells={anchor=center}},
    xlabel={Index},
    clip marker paths=true,ymin=1e-6,axis on top=true,ymax=1,xmin=1,xmax=200,
    ylabel={Eigenvalue from \cref{eq:gevpsvd}},
    extra y ticks={1},
    extra y tick labels={1},
    mark list fill={.!75!white},
    cycle multiindex* list={
        my marks
            \nextlist
        [4 of]linestyles*
            \nextlist
        thick
    },
    mark repeat=20,
    legend columns=5,
    transpose legend
]
\addlegendimage{black,fill=black!50!black,mark=text,text mark={},only marks} \addlegendentry{$\delta = 1$}
\addplot+[RdGy-A] table [x expr=\coordindex+1, y expr=1/sqrt(\thisrowno{0})] {\diffusionA}; \addlegendentry{$i=1$}
\addplot+[RdGy-A] table [x expr=\coordindex+1, y expr=1/sqrt(\thisrowno{1})] {\diffusionA}; \addlegendentry{$i=65$}
\addplot+[RdGy-A] table [x expr=\coordindex+1, y expr=1/sqrt(\thisrowno{2})] {\diffusionA}; \addlegendentry{$i=129$}
\addplot+[RdGy-A] table [x expr=\coordindex+1, y expr=1/sqrt(\thisrowno{3})] {\diffusionA}; \addlegendentry{$i=193$}

\addlegendimage{black,fill=black!50!black,mark=text,text mark={},only marks} \addlegendentry{$\delta = 3$}
\addplot+[RdGy-D] table [x expr=\coordindex+1, y expr=1/sqrt(\thisrowno{0})] {\diffusionB}; \addlegendentry{$i=1$}
\addplot+[RdGy-D] table [x expr=\coordindex+1, y expr=1/sqrt(\thisrowno{1})] {\diffusionB}; \addlegendentry{$i=65$}
\addplot+[RdGy-D] table [x expr=\coordindex+1, y expr=1/sqrt(\thisrowno{2})] {\diffusionB}; \addlegendentry{$i=129$}
\addplot+[RdGy-D] table [x expr=\coordindex+1, y expr=1/sqrt(\thisrowno{3})] {\diffusionB}; \addlegendentry{$i=193$}

\addlegendimage{black,fill=black!50!black,mark=text,text mark={},only marks} \addlegendentry{$\delta = 5$}
\addplot+[RdGy-K] table [x expr=\coordindex+1, y expr=1/sqrt(\thisrowno{0})] {\diffusionC}; \addlegendentry{$i=1$}
\addplot+[RdGy-K] table [x expr=\coordindex+1, y expr=1/sqrt(\thisrowno{1})] {\diffusionC}; \addlegendentry{$i=65$}
\addplot+[RdGy-K] table [x expr=\coordindex+1, y expr=1/sqrt(\thisrowno{2})] {\diffusionC}; \addlegendentry{$i=129$}
\addplot+[RdGy-K] table [x expr=\coordindex+1, y expr=1/sqrt(\thisrowno{3})] {\diffusionC}; \addlegendentry{$i=193$}
\end{semilogyaxis}
\begin{semilogyaxis}[height=6cm,width=6cm,
at={(ax1.south east)},
xshift=2cm,
    clip marker paths=true,ymin=1e-6,axis on top=true,ymax=1,xmin=1,xmax=200,
    legend style={at={(-0.97,-0.9)},cells={anchor=west}},
    xlabel={Index},
    ylabel={Singular value from \cref{sec:nonsymmetric}},
    extra y ticks={1},
    extra y tick labels={1},
    mark list fill={.!75!white},
    cycle multiindex* list={
        my marks
            \nextlist
        [4 of]linestyles*
            \nextlist
        thick
    },
    mark repeat=20,
    legend columns=5,
    transpose legend
]
\pgfplotstablegetelem{0}{0}\of{\stokesA}
\let\zeroA\pgfplotsretval
\pgfplotstablegetelem{0}{1}\of{\stokesA}
\let\zeroB\pgfplotsretval
\pgfplotstablegetelem{0}{2}\of{\stokesA}
\let\zeroC\pgfplotsretval
\pgfplotstablegetelem{0}{3}\of{\stokesA}
\let\zeroD\pgfplotsretval
\addplot+[RdGy-A] table [x expr=\coordindex+1, y expr=\thisrowno{0}/\zeroA] {\stokesA};
\addplot+[RdGy-A] table [x expr=\coordindex+1, y expr=\thisrowno{1}/\zeroB] {\stokesA};
\addplot+[RdGy-A] table [x expr=\coordindex+1, y expr=\thisrowno{2}/\zeroC] {\stokesA};
\addplot+[RdGy-A] table [x expr=\coordindex+1, y expr=\thisrowno{3}/\zeroD] {\stokesA};

\pgfplotstablegetelem{0}{0}\of{\stokesB}
\let\zeroE\pgfplotsretval
\pgfplotstablegetelem{0}{1}\of{\stokesB}
\let\zeroF\pgfplotsretval
\pgfplotstablegetelem{0}{2}\of{\stokesB}
\let\zeroG\pgfplotsretval
\pgfplotstablegetelem{0}{3}\of{\stokesB}
\let\zeroH\pgfplotsretval
\addplot+[RdGy-D] table [x expr=\coordindex+1, y expr=\thisrowno{0}/\zeroE] {\stokesB};
\addplot+[RdGy-D] table [x expr=\coordindex+1, y expr=\thisrowno{1}/\zeroF] {\stokesB};
\addplot+[RdGy-D] table [x expr=\coordindex+1, y expr=\thisrowno{2}/\zeroG] {\stokesB};
\addplot+[RdGy-D] table [x expr=\coordindex+1, y expr=\thisrowno{3}/\zeroH] {\stokesB};

\pgfplotstablegetelem{0}{0}\of{\stokesC}
\let\zeroI\pgfplotsretval
\pgfplotstablegetelem{0}{1}\of{\stokesC}
\let\zeroJ\pgfplotsretval
\pgfplotstablegetelem{0}{2}\of{\stokesC}
\let\zeroK\pgfplotsretval
\pgfplotstablegetelem{0}{3}\of{\stokesC}
\let\zeroL\pgfplotsretval
\addplot+[RdGy-K] table [x expr=\coordindex+1, y expr=\thisrowno{0}/\zeroI] {\stokesC};
\addplot+[RdGy-K] table [x expr=\coordindex+1, y expr=\thisrowno{1}/\zeroJ] {\stokesC};
\addplot+[RdGy-K] table [x expr=\coordindex+1, y expr=\thisrowno{2}/\zeroK] {\stokesC};
\addplot+[RdGy-K] table [x expr=\coordindex+1, y expr=\thisrowno{3}/\zeroL] {\stokesC};
\end{semilogyaxis}
\end{tikzpicture} 
\caption{For a partitioning with $N=256$ subdomains of a three-dimensional diffusion (resp.\ Stokes) problem on the left-hand side (resp.\ right-hand side), the first $200$ eigenvalues $\lambda$ (resp. singular values $\sigma$) of the generalized eigenvalue problem $\Pi_i^\top D_iA_{ii}D_i\Pi u = \lambda^2 A_{ii}u$ (resp.\ $\Pi_i^\top D_i D_i\Pi_i u = \sigma^2 u$) are plotted for three values of overlapping layers $\delta = 1$, $3$, and $5$; showing values only for subdomains $i=1$, $65$, $129$, and $193$. }
\label{fig:decay_eigenvalues}
\end{figure}

\subsection{Comparison against two-level Schwarz methods}
Throughout this
comparison, the right-preconditioned GMRES~\cite{SaaS86} with no restart and a
relative tolerance set to $10^{-8}$ is used, with 256 subdomains.
The theory concerning the proposed methods holds when we combine the additive
Schwarz method~\cref{eq:onelevelras}, as the one-level correction, with an additive
coarse-space correction (second-level). As custom with domain decomposition
preconditioners, we instead use the restricted additive Schwarz method as the
one-level correction combined with a deflated variant of the
coarse-space correction, see \cite[section 3.2.1]{JolRZ21}. The setup of the coarse
operator itself remains the same, the previous remark only matters when
computing the action of the preconditioner on a vector.
The methods we compare against use the same combination as well.
Therefore, the main difference between the compared methods is the coarse space. However, the number
of overlapping layers $\delta$ may also be different.

In all that follows:
\begin{itemize}
    \item ``\ref{plot:geneo}GenEO'' (only for SPD problems) A generalized eigenvalue problem in each subdomain is solved to construct the coarse space. The involved matrices are the left-and-right-PoU-weighted subdomain matrix and the unassembled subdomain matrix (Neumann matrix)~\cite{SpiDHNPS14}. Note that this method is not algebraic since the unassembled matrix requires more information from the discretization kernel. One layer of overlap is always used.
    \item ``\ref{plot:svd}SVD'' The coarse space is assembled by finding in each subdomain $i$ the dominant subspace of $D_i\Pi_i$ which can be carried out by using the SVD, see~\cref{eq:svdsplitting}. The number of overlapping layers is $\delta$.
    \item ``\ref{plot:eps}GEVP'' (only for SPD problems) The coarse space is assembled by solving a generalized eigenvalue problem in each subdomain. The involved matrices are the subdomain matrix and the left-and-right-PoU-weighted subdomain matrix, see~\cref{eq:gevpsvd}. The number of overlapping layers is $\delta$.
    \item ``\ref{plot:block}Block splitting'' A generalized eigenvalue problem is solved in each subdomain to construct a coarse space. One of the matrices involved is the left-and-right-PoU-weighted subdomain matrix. The other matrix is obtained algebraically based on a heuristic (which is guaranteed for Hermitian diagonally dominant matrices), see~\cite{AldJR23}. One layer of overlap is used.
\end{itemize}

The following problems are solved.
\begin{itemize}
    \item \cref{fig:bilaplace}, constant-coefficient fourth-order two-dimensional bilaplacian discretized with Morley finite elements~\cite{peric1991morley};
    \item \cref{fig:diffusion-heterogeneous}, heterogeneous-coefficient three-dimensional Poisson equation discretized with piecewise-linear Lagrange finite elements;
    \item \cref{fig:elasticity-2d}, constant-coefficient two-dimensional linear elasticity discretized with piecewise-linear Lagrange finite elements;
    \item \cref{fig:elasticity-3d}, heterogeneous-coefficient three-dimensional linear elasticity discretized with piecewise-linear Lagrange finite elements (for the coefficient distribution, see Figure 6.2 of~\cite{AldGJT21});
    \item \cref{fig:stokes-2d}, constant-coefficient two-dimensional lid-driven cavity problem discretized with lowest-order Taylor--Hood finite elements;
    \item \cref{fig:stokes-3d}, constant-coefficient three-dimensional lid-driven cavity problem discretized with lowest-order Taylor--Hood finite elements;
    \item \cref{fig:advection-2d}, heterogeneous-coefficient two-dimensional SUPG-stabilized advection equation discretized with piecewise-linear Lagrange finite elements (for the coefficient distribution, see Figure 2 (b) of~\cite{AldJR23});
    \item \cref{fig:advection-3d}, heterogeneous-coefficient three-dimensional SUPG-stabilized advection equation discretized with piecewise-linear Lagrange finite elements (for the coefficient distribution, see Figure 2 (b) of~\cite{AldJR23});
\end{itemize}
The parameters (overlap and threshold for selecting eigenvectors or singular vectors) for the various preconditioners are displayed on the bottom right-hand side table for all previously listed figures.
Overall, while convergence is under control for all these physics, one can notice that the grid and operator complexities (GC and OC), of the proposed two-level preconditioner are higher than previously developed preconditioners. We recall that, with a two-level method, $\text{GC} = 1 + \frac{n_C}{n}$ and $\text{OC} = 1 + \frac{\text{nnz}_C}{\text{nnz}_A}$ where $\text{nnz}_C$ (resp.\ $\text{nnz}_A$) is the number of nonzero entries in the coarse (resp.\ fine) operator.

\pgfplotstableread{figures/bilaplace_2d.dat}\loadedconvergence
\pgfplotstableread{figures/bilaplace_2d_op.dat}\sizetable
\pgfplotstableread{figures/bilaplace_2d_param.dat}\paramtable
\begin{figure}
\pgfplotstablegetelem{0}{svd}\of{\loadedconvergence}
\pgfkeys{/pgf/fpu}
    \pgfplotstablecreatecol[create col/expr={\thisrow{geneo}/\pgfplotsretval}]{geneo-rel}\loadedconvergence
    \pgfplotstablecreatecol[create col/expr={\thisrow{svd}/\pgfplotsretval}]{svd-rel}\loadedconvergence
    \pgfplotstablecreatecol[create col/expr={\thisrow{eps}/\pgfplotsretval}]{eps-rel}\loadedconvergence
    \pgfmathparse{multiply(1e-8,1)}
    \let\myextratick=\pgfmathresult
    \pgfkeys{/pgf/fpu=false}
    \begin{minipage}{0.6\textwidth}
\begin{tikzpicture}
\begin{semilogyaxis}[xmin=1,ymax=1,width=7cm, clip marker paths=true, axis on top=true, enlarge y limits,
    legend style={at={(0.97,0.9)},cells={anchor=west}},
    xlabel={Iteration number},
    ylabel={Unpreconditioned relative residual},
    extra y ticks = {\myextratick},
    extra y tick style = {grid=major},
    extra y tick labels={},
yticklabel={
\pgfmathparse{int(\tick)}
\ifnum\pgfmathresult=0
$1$
\else
\pgfmathparse{int(\tick/ln(10))}{$10^{\pgfmathresult}$}
\fi
},
            mark repeat=1,legend columns=1
]
\addplot[color=RdGy-B, very thick,mark options={solid,fill=.!75!white},mark=*, dashed] table [x=it, y index=4] {\loadedconvergence};\label{plot:geneo}
\addplot[color=RdGy-E, very thick,mark options={solid,fill=.!75!white},mark=square*, dashed] table [x=it, y index=5] {\loadedconvergence};\label{plot:svd}
\addplot[color=RdGy-G, very thick,mark options={solid,fill=.!75!white},mark=triangle*, dashed] table [x=it, y index=6] {\loadedconvergence};\label{plot:eps}
    \legend{GenEO, SVD, GEVP}
\end{semilogyaxis}
\end{tikzpicture} 
    \end{minipage}
    \begin{minipage}{0.35\textwidth}
\pgfplotstabletypeset[every head row/.style={before row=\hline,after row=\hline},
                  every last row/.style={after row=\hline},
                  every even row/.style={before row={\rowcolor[gray]{0.9}}},
                  columns/it/.style={column name=Iterations},
                  columns/Nc/.style={int detect,column name=$n_C$,dec sep align},
                  columns/gc/.style={column name=GC,dec sep align,fixed,precision=3},
                  columns/oc/.style={column name=OC,dec sep align,fixed,precision=3},
                  columns={it,Nc,gc,oc},
                  font={\footnotesize},
                  ]\sizetable
    \\[1cm]
\pgfplotstabletypeset[every head row/.style={before row=\hline,after row=\hline},
                  every last row/.style={after row=\hline},
                  every even row/.style={before row={\rowcolor[gray]{0.9}}},
                  columns/pc/.style={string type, column name=Preconditioner},
                  columns/ovl/.style={int detect,column name=Overlap},
                  columns/threshold/.style={string type, column name=Threshold},
                  columns={pc,ovl,threshold},
                  font={\footnotesize},
                  ]\paramtable
    \end{minipage}
    \caption{Bilaplace 2D (\pgfmathprintnumber{1002001} unknowns)}\label{fig:bilaplace}
\end{figure}

\pgfplotstableread{figures/diffusion_heterogeneous_3d.dat}\loadedconvergence
\pgfplotstableread{figures/diffusion_heterogeneous_3d_op.dat}\sizetable
\pgfplotstableread{figures/diffusion_heterogeneous_3d_param.dat}\paramtable
\begin{figure}
\pgfplotstablegetelem{0}{svd}\of{\loadedconvergence}
\pgfkeys{/pgf/fpu}
    \pgfplotstablecreatecol[create col/expr={\thisrow{geneo}/\pgfplotsretval}]{geneo-rel}\loadedconvergence
    \pgfplotstablecreatecol[create col/expr={\thisrow{svd}/\pgfplotsretval}]{svd-rel}\loadedconvergence
    \pgfplotstablecreatecol[create col/expr={\thisrow{eps}/\pgfplotsretval}]{eps-rel}\loadedconvergence
    \pgfmathparse{multiply(1e-8,1)}
    \let\myextratick=\pgfmathresult
    \pgfkeys{/pgf/fpu=false}
    \begin{minipage}{0.6\textwidth}
\begin{tikzpicture}
\begin{semilogyaxis}[xmin=1,ymax=1,width=7cm, clip marker paths=true, axis on top=true, enlarge y limits,
    legend style={at={(0.97,0.9)},cells={anchor=west}},
    xlabel={Iteration number},
    ylabel={Unpreconditioned relative residual},
    extra y ticks = {\myextratick},
    extra y tick style = {grid=major},
    extra y tick labels={},
yticklabel={
\pgfmathparse{int(\tick)}
\ifnum\pgfmathresult=0
$1$
\else
\pgfmathparse{int(\tick/ln(10))}{$10^{\pgfmathresult}$}
\fi
},
            mark repeat=1,legend columns=1
]
\addplot[color=RdGy-B, very thick,mark options={solid,fill=.!75!white},mark=*, dashed] table [x=it, y index=4] {\loadedconvergence};
\addplot[color=RdGy-E, very thick,mark options={solid,fill=.!75!white},mark=square*, dashed] table [x=it, y index=5] {\loadedconvergence};
\addplot[color=RdGy-G, very thick,mark options={solid,fill=.!75!white},mark=triangle*, dashed] table [x=it, y index=6] {\loadedconvergence};
    \legend{GenEO, SVD, GEVP}
\end{semilogyaxis}
\end{tikzpicture} 
    \end{minipage}
    \begin{minipage}{0.35\textwidth}
\pgfplotstabletypeset[every head row/.style={before row=\hline,after row=\hline},
                  every last row/.style={after row=\hline},
                  every even row/.style={before row={\rowcolor[gray]{0.9}}},
                  columns/it/.style={column name=Iterations},
                  columns/Nc/.style={int detect,column name=$n_C$,dec sep align},
                  columns/gc/.style={column name=GC,dec sep align,fixed,precision=3},
                  columns/oc/.style={column name=OC,dec sep align,fixed,precision=3},
                  columns={it,Nc,gc,oc},
                  font={\footnotesize},
                  ]\sizetable
                  \\[1cm]
\pgfplotstabletypeset[every head row/.style={before row=\hline,after row=\hline},
                  every last row/.style={after row=\hline},
                  every even row/.style={before row={\rowcolor[gray]{0.9}}},
                  columns/pc/.style={string type, column name=Preconditioner},
                  columns/ovl/.style={int detect,column name=Overlap},
                  columns/threshold/.style={string type, column name=Threshold},
                  columns={pc,ovl,threshold},
                  font={\footnotesize},
                  ]\paramtable
    \end{minipage}
    \caption{Diffusion 3D (\pgfmathprintnumber{531441} unknowns)}\label{fig:diffusion-heterogeneous}
\end{figure}

\pgfplotstableread{figures/elasticity_2d.dat}\loadedconvergence
\pgfplotstableread{figures/elasticity_2d_op.dat}\sizetable
\pgfplotstableread{figures/elasticity_2d_param.dat}\paramtable
\begin{figure}
\pgfplotstablegetelem{0}{svd}\of{\loadedconvergence}
\pgfkeys{/pgf/fpu}
    \pgfplotstablecreatecol[create col/expr={\thisrow{geneo}/\pgfplotsretval}]{geneo-rel}\loadedconvergence
    \pgfplotstablecreatecol[create col/expr={\thisrow{svd}/\pgfplotsretval}]{svd-rel}\loadedconvergence
    \pgfplotstablecreatecol[create col/expr={\thisrow{eps}/\pgfplotsretval}]{eps-rel}\loadedconvergence
    \pgfmathparse{multiply(1e-8,1)}
    \let\myextratick=\pgfmathresult
    \pgfkeys{/pgf/fpu=false}
    \begin{minipage}{0.6\textwidth}
\begin{tikzpicture}
\begin{semilogyaxis}[xmin=1,ymax=1,width=7cm, clip marker paths=true, axis on top=true, enlarge y limits,
    legend style={at={(0.97,0.9)},cells={anchor=west}},
    xlabel={Iteration number},
    ylabel={Unpreconditioned relative residual},
    extra y ticks = {\myextratick},
    extra y tick style = {grid=major},
    extra y tick labels={},
yticklabel={
\pgfmathparse{int(\tick)}
\ifnum\pgfmathresult=0
$1$
\else
\pgfmathparse{int(\tick/ln(10))}{$10^{\pgfmathresult}$}
\fi
},
            mark repeat=1,legend columns=1
]
\addplot[color=RdGy-B, very thick,mark options={solid,fill=.!75!white},mark=*, dashed] table [x=it, y index=4] {\loadedconvergence};
\addplot[color=RdGy-E, very thick,mark options={solid,fill=.!75!white},mark=square*, dashed] table [x=it, y index=5] {\loadedconvergence};
\addplot[color=RdGy-G, very thick,mark options={solid,fill=.!75!white},mark=triangle*, dashed] table [x=it, y index=6] {\loadedconvergence};
    \legend{GenEO, SVD, GEVP}
\end{semilogyaxis}
\end{tikzpicture} 
    \end{minipage}
    \begin{minipage}{0.35\textwidth}
\pgfplotstabletypeset[every head row/.style={before row=\hline,after row=\hline},
                  every last row/.style={after row=\hline},
                  every even row/.style={before row={\rowcolor[gray]{0.9}}},
                  columns/it/.style={column name=Iterations},
                  columns/Nc/.style={int detect,column name=$n_C$,dec sep align},
                  columns/gc/.style={column name=GC,dec sep align,fixed,precision=3},
                  columns/oc/.style={column name=OC,dec sep align,fixed,precision=3},
                  columns={it,Nc,gc,oc},
                  font={\footnotesize},
                  ]\sizetable
                  \\[1cm]
\pgfplotstabletypeset[every head row/.style={before row=\hline,after row=\hline},
                  every last row/.style={after row=\hline},
                  every even row/.style={before row={\rowcolor[gray]{0.9}}},
                  columns/pc/.style={string type, column name=Preconditioner},
                  columns/ovl/.style={int detect,column name=Overlap},
                  columns/threshold/.style={string type, column name=Threshold},
                  columns={pc,ovl,threshold},
                  font={\footnotesize},
                  ]\paramtable
    \end{minipage}
    \caption{Elasticity 2D (\pgfmathprintnumber{1084202} unknowns)}\label{fig:elasticity-2d}
\end{figure}

\pgfplotstableread{figures/elasticity_3d.dat}\loadedconvergence
\pgfplotstableread{figures/elasticity_3d_op.dat}\sizetable
\pgfplotstableread{figures/elasticity_3d_param.dat}\paramtable
\begin{figure}
\pgfplotstablegetelem{0}{svd}\of{\loadedconvergence}
\pgfkeys{/pgf/fpu}
    \pgfplotstablecreatecol[create col/expr={\thisrow{geneo}/\pgfplotsretval}]{geneo-rel}\loadedconvergence
    \pgfplotstablecreatecol[create col/expr={\thisrow{svd}/\pgfplotsretval}]{svd-rel}\loadedconvergence
    \pgfplotstablecreatecol[create col/expr={\thisrow{eps}/\pgfplotsretval}]{eps-rel}\loadedconvergence
    \pgfmathparse{multiply(1e-8,1)}
    \let\myextratick=\pgfmathresult
    \pgfkeys{/pgf/fpu=false}
    \begin{minipage}{0.6\textwidth}
\begin{tikzpicture}
\begin{semilogyaxis}[xmin=1,ymax=1,width=7cm, clip marker paths=true, axis on top=true, enlarge y limits,
    legend style={at={(0.97,0.9)},cells={anchor=west}},
    xlabel={Iteration number},
    ylabel={Unpreconditioned relative residual},
    extra y ticks = {\myextratick},
    extra y tick style = {grid=major},
    extra y tick labels={},
yticklabel={
\pgfmathparse{int(\tick)}
\ifnum\pgfmathresult=0
$1$
\else
\pgfmathparse{int(\tick/ln(10))}{$10^{\pgfmathresult}$}
\fi
},
            mark repeat=1,legend columns=1
]
\addplot[color=RdGy-B, very thick,mark options={solid,fill=.!75!white},mark=*, dashed] table [x=it, y index=4] {\loadedconvergence};
\addplot[color=RdGy-E, very thick,mark options={solid,fill=.!75!white},mark=square*, dashed] table [x=it, y index=5] {\loadedconvergence};
\addplot[color=RdGy-G, very thick,mark options={solid,fill=.!75!white},mark=triangle*, dashed] table [x=it, y index=6] {\loadedconvergence};
    \legend{GenEO, SVD, GEVP}
\end{semilogyaxis}
\end{tikzpicture} 
    \end{minipage}
    \begin{minipage}{0.35\textwidth}
\pgfplotstabletypeset[every head row/.style={before row=\hline,after row=\hline},
                  every last row/.style={after row=\hline},
                  every even row/.style={before row={\rowcolor[gray]{0.9}}},
                  columns/it/.style={column name=Iterations},
                  columns/Nc/.style={int detect,column name=$n_C$,dec sep align},
                  columns/gc/.style={column name=GC,dec sep align,fixed,precision=3},
                  columns/oc/.style={column name=OC,dec sep align,fixed,precision=3},
                  columns={it,Nc,gc,oc},
                  font={\footnotesize},
                  ]\sizetable
                  \\[1cm]
\pgfplotstabletypeset[every head row/.style={before row=\hline,after row=\hline},
                  every last row/.style={after row=\hline},
                  every even row/.style={before row={\rowcolor[gray]{0.9}}},
                  columns/pc/.style={string type, column name=Preconditioner},
                  columns/ovl/.style={int detect,column name=Overlap},
                  columns/threshold/.style={string type, column name=Threshold},
                  columns={pc,ovl,threshold},
                  font={\footnotesize},
                  ]\paramtable
    \end{minipage}
    \caption{Elasticity 3D (\pgfmathprintnumber{521823} unknowns)}\label{fig:elasticity-3d}
\end{figure}

\pgfplotstableread{figures/stokes_2d.dat}\loadedconvergence
\pgfplotstablegetelem{0}{[index]1}\of{\loadedconvergence}
\pgfplotstableread{figures/stokes_2d_op.dat}\sizetable
\pgfplotstableread{figures/stokes_2d_param.dat}\paramtable
\begin{figure}
\pgfkeys{/pgf/fpu}
    \pgfplotstablecreatecol[create col/expr={\thisrow{svd}/\pgfplotsretval}]{svd-rel}\loadedconvergence
    \pgfmathparse{multiply(1e-8,1)}
    \let\myextratick=\pgfmathresult
    \pgfkeys{/pgf/fpu=false}
    \begin{minipage}{0.6\textwidth}
\begin{tikzpicture}
\begin{semilogyaxis}[xmin=1,ymax=1,width=7cm, clip marker paths=true, axis on top=true, enlarge y limits,
    legend style={at={(0.97,0.9)},cells={anchor=west}},
    xlabel={Iteration number},
    ylabel={Unpreconditioned relative residual},
    extra y ticks = {\myextratick},
    extra y tick style = {grid=major},
    extra y tick labels={},
yticklabel={
\pgfmathparse{int(\tick)}
\ifnum\pgfmathresult=0
$1$
\else
\pgfmathparse{int(\tick/ln(10))}{$10^{\pgfmathresult}$}
\fi
},
            mark repeat=1,legend columns=1
]
    \addplot[color=RdGy-E, very thick,mark options={solid,fill=.!75!white},mark=square*, dashed] table [x=it, y index=2] {\loadedconvergence};
    \legend{SVD}
\end{semilogyaxis}
\end{tikzpicture} 
    \end{minipage}
    \begin{minipage}{0.35\textwidth}
\pgfplotstabletypeset[every head row/.style={before row=\hline,after row=\hline},
                  every last row/.style={after row=\hline},
                  every even row/.style={before row={\rowcolor[gray]{0.9}}},
                  columns/it/.style={column name=Iterations},
                  columns/Nc/.style={int detect,column name=$n_C$,dec sep align},
                  columns/gc/.style={column name=GC,dec sep align,fixed,precision=3},
                  columns/oc/.style={column name=OC,dec sep align,fixed,precision=3},
                  columns={it,Nc,gc,oc},
                  font={\footnotesize},
                  ]\sizetable
                  \\[1cm]
\pgfplotstabletypeset[every head row/.style={before row=\hline,after row=\hline},
                  every last row/.style={after row=\hline},
                  every even row/.style={before row={\rowcolor[gray]{0.9}}},
                  columns/pc/.style={string type, column name=Preconditioner},
                  columns/ovl/.style={int detect,column name=Overlap},
                  columns/threshold/.style={string type, column name=Threshold},
                  columns={pc,ovl,threshold},
                  font={\footnotesize},
                  ]\paramtable
    \end{minipage}
    \caption{Stokes 2D (\pgfmathprintnumber{1444003} unknowns)}\label{fig:stokes-2d}
\end{figure}

\pgfplotstableread{figures/stokes_3d.dat}\loadedconvergence
\pgfplotstableread{figures/stokes_3d_op.dat}\sizetable
\pgfplotstableread{figures/stokes_3d_param.dat}\paramtable
\begin{figure}
\pgfplotstablegetelem{0}{svd}\of{\loadedconvergence}
\pgfkeys{/pgf/fpu}
    \pgfplotstablecreatecol[create col/expr={\thisrow{svd}/\pgfplotsretval}]{svd-rel}\loadedconvergence
    \pgfmathparse{multiply(1e-8,1)}
    \let\myextratick=\pgfmathresult
    \pgfkeys{/pgf/fpu=false}
    \begin{minipage}{0.6\textwidth}
\begin{tikzpicture}
\begin{semilogyaxis}[xmin=1,ymax=1,width=7cm, clip marker paths=true, axis on top=true, enlarge y limits,
    legend style={at={(0.97,0.9)},cells={anchor=west}},
    xlabel={Iteration number},
    ylabel={Unpreconditioned relative residual},
    extra y ticks = {\myextratick},
    extra y tick style = {grid=major},
    extra y tick labels={},
yticklabel={
\pgfmathparse{int(\tick)}
\ifnum\pgfmathresult=0
$1$
\else
\pgfmathparse{int(\tick/ln(10))}{$10^{\pgfmathresult}$}
\fi
},
            mark repeat=1,legend columns=1
]
\addplot[color=RdGy-E, very thick,mark options={solid,fill=.!75!white},mark=square*, dashed] table [x=it, y index=2] {\loadedconvergence};
    \legend{SVD}
\end{semilogyaxis}
\end{tikzpicture} 
    \end{minipage}
    \begin{minipage}{0.35\textwidth}
\pgfplotstabletypeset[every head row/.style={before row=\hline,after row=\hline},
                  every last row/.style={after row=\hline},
                  every even row/.style={before row={\rowcolor[gray]{0.9}}},
                  columns/it/.style={column name=Iterations},
                  columns/Nc/.style={int detect,column name=$n_C$,dec sep align},
                  columns/gc/.style={column name=GC,dec sep align,fixed,precision=3},
                  columns/oc/.style={column name=OC,dec sep align,fixed,precision=3},
                  columns={it,Nc,gc,oc},
                  font={\footnotesize},
                  ]\sizetable
                  \\[1cm]
\pgfplotstabletypeset[every head row/.style={before row=\hline,after row=\hline},
                  every last row/.style={after row=\hline},
                  every even row/.style={before row={\rowcolor[gray]{0.9}}},
                  columns/pc/.style={string type, column name=Preconditioner},
                  columns/ovl/.style={int detect,column name=Overlap},
                  columns/threshold/.style={string type, column name=Threshold},
                  columns={pc,ovl,threshold},
                  font={\footnotesize},
                  ]\paramtable
    \end{minipage}
    \caption{Stokes 3D (\pgfmathprintnumber{1663244} unknowns)}\label{fig:stokes-3d}
\end{figure}

\pgfplotstableread{figures/advection_2d.dat}\loadedconvergence
\pgfplotstableread{figures/advection_2d_op.dat}\sizetable
\pgfplotstableread{figures/advection_2d_param.dat}\paramtable
\begin{figure}
\pgfplotstablegetelem{0}{svd}\of{\loadedconvergence}
\pgfkeys{/pgf/fpu}
    \pgfplotstablecreatecol[create col/expr={\thisrow{svd}/\pgfplotsretval}]{svd-rel}\loadedconvergence
    \pgfplotstablecreatecol[create col/expr={\thisrow{block}/\pgfplotsretval}]{block-rel}\loadedconvergence
    \pgfmathparse{multiply(1e-8,1)}
    \let\myextratick=\pgfmathresult
    \pgfkeys{/pgf/fpu=false}
    \begin{minipage}{0.6\textwidth}
\begin{tikzpicture}
\begin{semilogyaxis}[xmin=1,ymax=1,width=7cm, clip marker paths=true, axis on top=true, enlarge y limits,
    legend style={at={(0.97,0.9)},cells={anchor=west}},
    xlabel={Iteration number},
    ylabel={Unpreconditioned relative residual},
    extra y ticks = {\myextratick},
    extra y tick style = {grid=major},
    extra y tick labels={},
yticklabel={
\pgfmathparse{int(\tick)}
\ifnum\pgfmathresult=0
$1$
\else
\pgfmathparse{int(\tick/ln(10))}{$10^{\pgfmathresult}$}
\fi
},
            mark repeat=1,legend columns=1
]
\addplot[color=RdGy-E, very thick,mark options={solid,fill=.!75!white},mark=square*, dashed] table [x=it, y index=4] {\loadedconvergence};
\addplot[color=RdGy-K, very thick,mark options={solid,fill=.!75!white},mark=diamond*, dashed] table [x=it, y index=3] {\loadedconvergence};\label{plot:block}
    \legend{SVD, Block splitting}
\end{semilogyaxis}
\end{tikzpicture} 
    \end{minipage}
    \begin{minipage}{0.35\textwidth}
\pgfplotstabletypeset[every head row/.style={before row=\hline,after row=\hline},
                  every last row/.style={after row=\hline},
                  every even row/.style={before row={\rowcolor[gray]{0.9}}},
                  columns/it/.style={column name=Iterations},
                  columns/Nc/.style={int detect,column name=$n_C$,dec sep align},
                  columns/gc/.style={column name=GC,dec sep align,fixed,precision=3},
                  columns/oc/.style={column name=OC,dec sep align,fixed,precision=3},
                  columns={it,Nc,gc,oc},
                  font={\footnotesize},
                  ]\sizetable
                  \\[1cm]
\pgfplotstabletypeset[every head row/.style={before row=\hline,after row=\hline},
                  every last row/.style={after row=\hline},
                  every even row/.style={before row={\rowcolor[gray]{0.9}}},
                  columns/pc/.style={string type, column name=Preconditioner},
                  columns/ovl/.style={int detect,column name=Overlap},
                  columns/threshold/.style={string type, column name=Threshold},
                  columns={pc,ovl,threshold},
                  font={\footnotesize},
                  ]\paramtable
    \end{minipage}
    \caption{Advection 2D (\pgfmathprintnumber{2253001} unknowns)}\label{fig:advection-2d}
\end{figure}

\pgfplotstableread{figures/advection_3d.dat}\loadedconvergence
\pgfplotstableread{figures/advection_3d_op.dat}\sizetable
\pgfplotstableread{figures/advection_3d_param.dat}\paramtable
\begin{figure}
\pgfplotstablegetelem{0}{svd}\of{\loadedconvergence}
\pgfkeys{/pgf/fpu}
    \pgfplotstablecreatecol[create col/expr={\thisrow{svd}/\pgfplotsretval}]{svd-rel}\loadedconvergence
    \pgfplotstablecreatecol[create col/expr={\thisrow{block}/\pgfplotsretval}]{block-rel}\loadedconvergence
    \pgfmathparse{multiply(1e-8,1)}
    \let\myextratick=\pgfmathresult
    \pgfkeys{/pgf/fpu=false}
    \begin{minipage}{0.6\textwidth}
\begin{tikzpicture}
\begin{semilogyaxis}[xmin=1,ymax=1,width=7cm, clip marker paths=true, axis on top=true, enlarge y limits,
    legend style={at={(0.97,0.9)},cells={anchor=west}},
    xlabel={Iteration number},
    ylabel={Unpreconditioned relative residual},
    extra y ticks = {\myextratick},
    extra y tick style = {grid=major},
    extra y tick labels={},
yticklabel={
\pgfmathparse{int(\tick)}
\ifnum\pgfmathresult=0
$1$
\else
\pgfmathparse{int(\tick/ln(10))}{$10^{\pgfmathresult}$}
\fi
},
            mark repeat=1,legend columns=1
]
\addplot[color=RdGy-E, very thick,mark options={solid,fill=.!75!white},mark=square*, dashed] table [x=it, y index=4] {\loadedconvergence};
\addplot[color=RdGy-K, very thick,mark options={solid,fill=.!75!white},mark=diamond*, dashed] table [x=it, y index=3] {\loadedconvergence};
    \legend{SVD, Block splitting}
\end{semilogyaxis}
\end{tikzpicture} 
    \end{minipage}
    \begin{minipage}{0.35\textwidth}
\pgfplotstabletypeset[every head row/.style={before row=\hline,after row=\hline},
                  every last row/.style={after row=\hline},
                  every even row/.style={before row={\rowcolor[gray]{0.9}}},
                  columns/it/.style={column name=Iterations},
                  columns/Nc/.style={int detect,column name=$n_C$,dec sep align},
                  columns/gc/.style={column name=GC,dec sep align,fixed,precision=3},
                  columns/oc/.style={column name=OC,dec sep align,fixed,precision=3},
                  columns={it,Nc,gc,oc},
                  font={\footnotesize},
                  ]\sizetable
                  \\[1cm]
\pgfplotstabletypeset[every head row/.style={before row=\hline,after row=\hline},
                  every last row/.style={after row=\hline},
                  every even row/.style={before row={\rowcolor[gray]{0.9}}},
                  columns/pc/.style={string type, column name=Preconditioner},
                  columns/ovl/.style={int detect,column name=Overlap},
                  columns/threshold/.style={string type, column name=Threshold},
                  columns={pc,ovl,threshold},
                  font={\footnotesize},
                  ]\paramtable
    \end{minipage}
    \caption{Advection 3D (\pgfmathprintnumber{8120601} unknowns)}\label{fig:advection-3d}
\end{figure}

\Cref{tab:diffusion-3d} reports the results of a weak-scaling analysis. The mesh is iteratively refined uniformly while increasing the number of subdomains. The constant-coefficient three-dimensional Poisson equation discretized with piecewise-linear Lagrange finite elements is solved. The analysis starts with two subdomains and a problem of dimension \pgfmathprintnumber{29791}. It ends with 512 subdomains and a problem of dimension \pgfmathprintnumber{6967871}. While there is a slight increase in grid complexity, the number of iterates remains stable. This observation holds when the preconditioner is assembled using either local singular vectors (\cref{subfig:svd}) or local eigenvectors (\cref{subfig:eps}). Note that in this experiment, the relative tolerance is set to $10^{-10}$ (instead of $10^{-8}$ of the previous figures).
\begin{table}
\centering
    \subfloat[\ref{plot:svd}SVD\label{subfig:svd}]{
\pgfplotstableread{figures/scaling_svd.dat}\loadedconvergence
\pgfplotstabletypeset[every head row/.style={before row=\hline,after row=\hline},
                  every last row/.style={after row=\hline},
                  every even row/.style={before row={\rowcolor[gray]{0.9}}},
                  columns/it/.style={column name=Iterations},
                  columns/N/.style={int detect,column name=$N$,sort,dec sep align},
                  columns/nc/.style={int detect,column name=$n_C$,dec sep align},
                  columns/gc/.style={column name=GC,dec sep align,fixed,precision=3},
                  columns/oc/.style={column name=OC,dec sep align,fixed,precision=3},
                  columns={N,it,nc,gc,oc},
                  font={\footnotesize},
                  ]\loadedconvergence
    }
    \hfill
    \subfloat[\ref{plot:eps}GEVP\label{subfig:eps}]{
\pgfplotstableread{figures/scaling_eps.dat}\loadedconvergence
\pgfplotstabletypeset[every head row/.style={before row=\hline,after row=\hline},
                  every last row/.style={after row=\hline},
                  every even row/.style={before row={\rowcolor[gray]{0.9}}},
                  columns/it/.style={column name=Iterations},
                  columns/N/.style={int detect,column name=$N$,sort,dec sep align},
                  columns/nc/.style={int detect,column name=$n_C$,dec sep align},
                  columns/gc/.style={column name=GC,dec sep align,fixed,precision=3},
                  columns/oc/.style={column name=OC,dec sep align,fixed,precision=3},
                  columns={N,it,nc,gc,oc},
                  font={\footnotesize},
                  ]\loadedconvergence
}
    \caption{Diffusion 3D (approximately \pgfmathprintnumber{15000} interior unknowns per process)}\label{tab:diffusion-3d}
\end{table}

\section{Conclusion}
\label{sec:conclusion}
We presented in this paper new fully algebraic spectral coarse spaces for symmetric positive definite as well as nonsingular general matrices. The first spectral coarse space for SPD matrices requires the solution of one local, per subdomain, generalized eigenvalue problem. The bound on the condition number of the preconditioned matrix is independent of the number of subdomains. But, the dimension of the coarse space can be very large. Another local generalized eigenvalue problem is proposed to reduce the dimension of the coarse space. Therefore, two generalized eigenvalue problems are proposed to construct a two-level Schwarz preconditioner that provides an adaptive bound on the condition number of the preconditioned matrix. Numerical experiments showed the effectiveness of the proposed method and its competitiveness against state-of-the-art two-level Schwarz preconditioners. 
In the general case, a generalization of the local generalized eigenvalue problems is proposed to construct spectral coarse spaces for general nonsingular matrices without theoretical background, though. Nonetheless, numerical experiments demonstrated that the proposed method can be applied to a wide range of problems and prove effective for very challenging problems.
For SPD and general matrices, we found that there is a trade-off between increasing the number of layers in the overlap or fixing it and decreasing the truncation threshold $\tau$.
\appendix
\section*{Acknowledgments}
This work was granted access to the
GENCI-sponsored HPC resources of TGCC@CEA under allocation AD010607519R1. 

\bibliographystyle{siamplain}
\bibliography{main}
\end{document}